\numberwithin{equation}{section}
\newcommand{\bC}{\mathbb{C}}
\newcommand{\bR}{\mathbb{R}}
\newcommand{\bQ}{\mathbb{Q}}
\newcommand{\bE}{\mathbb{E}}
\newcommand{\mf}[1]{\mathfrak{#1}}
\newcommand{\mr}[1]{\mathrm{#1}}
\newcommand{\mcal}[1]{\mathcal{#1}}
\def\tr{\mathrm{tr} \,}
\def\sgn{\mathrm{sgn}}
\def\diag{\mathrm{diag}}
\def\hf{\mathrm{hf}}
\def\Wg{\mathrm{Wg}}
\def\tWg{\widetilde{\mathrm{Wg}}}
\def\Sym{\mathrm{Sym}}
\def\({ \left( }
\def\){ \right)}
\theoremstyle{plain}
\newtheorem{thm}{Theorem} 
\newtheorem{prop}{Proposition}
\newtheorem{lem}[prop]{Lemma}
\newtheorem{cor}[thm]{Corollary}
\theoremstyle{definition}
\newtheorem{example}{Example}
\newtheorem{remark}{Remark}
\theoremstyle{conjecture}
\theoremstyle{problem}
\title{\bfseries General moments of the inverse real Wishart distribution and
orthogonal Weingarten functions
}
\author{Sho Matsumoto}
\date{\empty}
\begin{document}

\maketitle

\begin{abstract}
We study a random positive definite symmetric matrix distributed according to
a real Wishart distribution.
We compute general moments of the random matrix and of its inverse explicitly. 
To do so, we employ the orthogonal Weingarten function,
which was recently introduced in 
the study for Haar-distributed orthogonal matrices.
As applications, we give formulas for moments of traces of a Wishart matrix and its inverse.
\footnote{
MSC 2000 subject classifications: 
Primary 15A52; secondary 60E05, 20C30. \\
Keywords:
Wishart distribution,
 Gelfand pair,
 zonal polynomial,
 orthogonal group,
 hafnian.
 }

\end{abstract}


\section{Introduction}

\subsection{Wishart distributions}

Let $d$ be a positive integer.
Let $\mathrm{Sym}(d)$ be the $\bR$-linear space
of $d \times d$  real symmetric  matrices,
and $\Omega=\Sym^+(d)$ the open convex cone of all positive definite
matrices in $\Sym(d)$.
Let $\sigma =(\sigma_{ij})_{1 \le i,j \le d} \in \Omega$, 
and let 
$$
\beta \in \left\{\frac{1}{2},\frac{2}{2},\dots, \frac{d-1}{2} 
\right\} \sqcup 
\(\frac{d-1}{2}, +\infty\).
$$
Then there exists a probability measure
$\mf{W}_{d,\beta,\sigma}$ on $\Omega$ such that
its moment generating function (or its Laplace transform)  is given by 
$$
\int_{\Omega} e^{\tr (\theta w)} \mf{W}_{d,\beta,\sigma}(\mathrm{d} w) =\det(I_d- \theta\sigma)^{-\beta},
$$
where $\theta$ is any $d \times d$ symmetric matrix such that $\sigma^{-1} -\theta \in
\Omega$.
We call $\mf{W}_{d,\beta,\sigma}$ the {\it real Wishart distribution} on
$\Omega$ with parameters $(\beta,\sigma)$ .

We call a random matrix $W \in \Omega$ a {\it real Wishart matrix}
associated with parameters $(\beta,\sigma)$
and write  $W \sim W_d(\beta,\sigma;\mathbb{R})$
if its distribution is  $\mf{W}_{d,\beta,\sigma}$.
Thus the moment generating function for $W$ is given by 
$$
\mathbb{E} [e^{\tr (\theta W)}] = \det(I_d- \theta\sigma)^{-\beta}, 
$$
with $\theta$ being as above.
Here $\bE$ stands for the average.

If $2\beta$ is a positive integer, $p= 2\beta$ say, 
then a Wishart matrix $W$ is expressed as follows.
Let $X_1,\dots,X_p$ be  $d$-dimensional random column vectors  
distributed independently according to the Gaussian distribution 
$N_d(0,\tfrac{1}{2}\sigma)$. 
Then
$$
W= X_1 X_1^t + \cdots +X_p X_p^t,
$$
where $X_i^t$ is the transpose of $X_i$, i.e., a row vector.

If $\beta > \tfrac{d-1}{2}$ (not necessarily an integer),
the distribution $\mf{W}_{d,\beta,\sigma}$ has the expression
$$
\mf{W}_{d,\beta,\sigma}( \mathrm{d} w)= f(w;d,\beta,\sigma) \mf{L} (\mathrm{d} w),
$$
where $f(w;d,\beta,\sigma)$ is the density function  given by
\begin{equation} \label{eq:Density}
f(w;d,\beta,\sigma)=
\Gamma_d(\beta)^{-1} (\det \sigma)^{-\beta} (\det w)^{\beta-\frac{d+1}{2}} e^{-\tr(\sigma^{-1} w)}
 \qquad (w \in \Omega)
\end{equation}
with the multivariate gamma function
$$
\Gamma_d(\beta)= \pi^{d(d-1)/4} \prod_{j=1}^d \Gamma\(\beta-\frac{1}{2}(j-1)\).
$$
Here  $\mf{L}$ is the Lebesgue measure on $\mr{Sym}(d)$ 
defined by
$$
\mf{L}(\mathrm{d} w)= \prod_{1 \le i \le j \le d} \mathrm{d} w_{ij} \qquad 
\text{with $w=(w_{ij})_{1 \le i,j \le d}$}.
$$

Likewise, a {\it complex} Wishart distribution is defined on
the set of all $d \times d$ positive definite hermitian complex matrices.
Given a Wishart matrix $W$, the distribution of the inverse matrix
$W^{-1}$ is called the {\it inverse} (or {\it inverted})  Wishart distribution.
We denote by $W_{ij}$ and $W^{ij}$ the $(i,j)$-entry of $W$ and $W^{-1}$,
respectively.

The Wishart distributions are fundamental distributions in multivariate statistical analysis.
We refer to \cite{Mu}.
The structure of Wishart distributions has been studied for a long time,
nevertheless, a lot of results are obtained only recently.
We are interested in moments of the forms $\bE[P(W)]$ and $\bE[P(W^{-1})]$, where
$P(A)$ is a polynomial in entries $A_{ij}$ of a matrix $A$.
In particular, we would like to compute {\it general moments} 
$$
\bE[W_{i_1 j_1} W_{i_2 j_2} \cdots W_{i_k j_k}] \qquad \text{and} \qquad
\bE[W^{i_1 j_1} W^{i_2 j_2} \cdots W^{i_k j_k}]
$$
for $W$ and $W^{-1}$, respectively.

Von Rosen \cite{VRosen} computed the general moments
 of low orders for $W^{-1}$.
Lu and Richards \cite{LR} gave formulas for $W$ by applying MacMahon's master theorem.
Graczyk et al. \cite{GLM1} gave  formulas for $W^{\pm 1}$ in the complex case
by using representation theory 
of symmetric groups, while 
they \cite{GLM2} gave results for only $W$ (not $W^{-1}$) in the real case
by using representation theory of hyperoctahedral groups.
Letac and Massam \cite{LM1} computed the moments $\bE[P(W)]$ and $\bE[P(W^{-1})]$
in both real and complex cases, 
where the $P$ is a polynomial depending only on the eigenvalues of a matrix.
Furthermore, a {\it noncentral} Wishart distribution was also studied, see \cite{LM2} and \cite{KN}.

\subsection{Results} \label{subsec:results}

Our main purpose in the present paper is to compute the general moment
$$
\bE [W^{i_1 j_1} W^{i_2 j_2} \cdots W^{i_k j_k}]
$$
for an {\it inverse real} Wishart matrix $W^{-1}=(W^{ij})$.
As we described, in the complex case Graczyk et al. \cite{GLM1} obtained formulas for
such a moment by the represention-theoretic approach.
Our main results are precisely their counterparts for the real case,
which had been unsolved.

To describe our main result, we recall {\it perfect matchings}.
Let $n$ be a positive integer and put $[n]=\{1,2,\dots,n \}$.
A perfect matching $\mf{m}$ on the $2n$-set $[2n]$ is an unordered pairing
of letters $1,2,\dots,2n$.
Denote by $\mcal{M}(2n)$ the set of all such perfect matchings.
For example, $\mcal{M}(4)$ consists of three elements:
$\{ \{1,2\}, \{3,4\} \}$, $\{ \{1,3\}, \{2,4\} \}$, and 
$\{\{1,4\}, \{2,3\}\}$.

Given a perfect matching $\mf{m} \in \mcal{M}(2n)$, we attach 
a (undirected) graph $G=G(\mf{m})$ defined as follows.
The vertex set of $G$ is $[2n]$.
The edge set of $G$ is 
$$
\big\{ \{ 2k-1,2k \} \ | \ k \in [n] \big\} \sqcup
\big\{ \{p,q\} \ | \ \{p,q\} \in \mf{m} \big\}.
$$
Then each vertex has just two edges, and
each connected component of $G$ has even vertices.
We denote by $\kappa(\mf{m})$ 
the number of connected components in $G(\mf{m})$.
For example, given $\mf{m}=
\{\{1,3\},\{2,7\},\{4,8\}, \{5,6\} \} \in \mcal{M}(8)$, 
the graph $G(\mf{m})$ has two connected components 
(where one component has vertices $1,2,3,4,7,8$ and the other has  $5,6$),
and therefore  $\kappa(\mf{m})=2$.

Now we give a formula of the general moments for $W$.
It would be reader-friendly to 
describe the results of both the real Wishart law itself 
(Theorem \ref{thm:GM})
and its inverse (Theorem \ref{thm:InvGM}) on display. 

\begin{thm} \label{thm:GM}
Let $W =(W_{ij})_{1 \le i,j \le d} \sim W_d(\beta,\sigma;\bR)$.
Given indices
$k_1,k_2,\dots,k_{2n}$ 
from $\{1,\dots,d\}$, we have
\begin{equation} \label{eq:thm:GM}
\bE[W_{k_1 k_2} W_{k_3 k_4}\cdots W_{k_{2n-1} k_{2n}}]=2^{-n}\sum_{\mf{m} \in \mcal{M}(2n)}
(2\beta)^{\kappa(\mf{m})} \prod_{ \{p,q\} \in \mf{m} } \sigma_{k_p k_q}. 
\end{equation}
\end{thm}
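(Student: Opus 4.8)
The plan is to prove the identity first when $2\beta$ is a positive integer, where the Gaussian model recalled in the Introduction is available, and then to extend it to arbitrary admissible $\beta$ by polynomiality. So assume first that $2\beta \in \bZ_{>0}$ and write $W=\sum_{i=1}^{2\beta} X_i X_i^t$ with $X_1,\dots,X_{2\beta}$ independent and $X_i \sim N_d(0,\tfrac12\sigma)$. Expanding $W_{k_{2a-1}k_{2a}}=\sum_{i_a=1}^{2\beta}(X_{i_a})_{k_{2a-1}}(X_{i_a})_{k_{2a}}$ in each factor and using finiteness of the sums, the moment becomes
\[
\bE[W_{k_1k_2}\cdots W_{k_{2n-1}k_{2n}}]
=\sum_{i_1,\dots,i_n=1}^{2\beta}\bE\Big[\prod_{a=1}^{n}(X_{i_a})_{k_{2a-1}}(X_{i_a})_{k_{2a}}\Big].
\]

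For each fixed $(i_1,\dots,i_n)$ the $2n$ coordinates inside the expectation are centered and jointly Gaussian, so Wick's (Isserlis') theorem applies. Labelling these $2n$ factors by $[2n]$ so that the factor at $p$ is the $k_p$-th coordinate of the vector $X_{i_{a(p)}}$, where $a(p)=\lceil p/2\rceil$ is the block of $p$, the covariance of the factors at $p$ and $q$ is $\delta_{i_{a(p)}i_{a(q)}}\tfrac12\sigma_{k_pk_q}$. Summing over all pairings gives
\[
\bE\Big[\prod_{a=1}^{n}(X_{i_a})_{k_{2a-1}}(X_{i_a})_{k_{2a}}\Big]
=\sum_{\mf{m}\in\mcal{M}(2n)}\ \prod_{\{p,q\}\in\mf{m}}\delta_{i_{a(p)}i_{a(q)}}\tfrac12\sigma_{k_pk_q}.
\]
Interchanging the two summations, the $n$ covariance factors $\tfrac12$ produce the overall constant $2^{-n}\prod_{\{p,q\}\in\mf{m}}\sigma_{k_pk_q}$, and it remains to evaluate $\sum_{i_1,\dots,i_n=1}^{2\beta}\prod_{\{p,q\}\in\mf{m}}\delta_{i_{a(p)}i_{a(q)}}$ for each $\mf{m}$.

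This last sum counts the tuples $(i_1,\dots,i_n)$ obeying the constraints $i_{a(p)}=i_{a(q)}$ imposed by the edges of $\mf{m}$, and these are exactly the constraints encoded by the graph $G(\mf{m})$. Indeed, the structure edge $\{2a-1,2a\}$ records that the two vertices of block $a$ carry the common index $i_a$, while each matching edge of $\mf{m}$ forces equality of the indices of the two blocks it joins; contracting every structure edge turns $G(\mf{m})$ into the multigraph on blocks whose edges are those of $\mf{m}$, without changing the number of connected components. Hence a tuple is admissible if and only if it is constant on the blocks lying in each connected component of $G(\mf{m})$, so it amounts to one free choice in $\{1,\dots,2\beta\}$ per component, and the sum equals $(2\beta)^{\kappa(\mf{m})}$. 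Combining the three displays establishes \eqref{eq:thm:GM} whenever $2\beta$ is a positive integer.

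Finally, to drop the integrality assumption I would argue by polynomiality in $\beta$. The right-hand side of \eqref{eq:thm:GM} is visibly a polynomial in $\beta$, and so is the left-hand side: from $\det(I_d-\theta\sigma)^{-\beta}=\exp\big(\beta\sum_{m\ge1}m^{-1}\tr((\theta\sigma)^m)\big)$ each Taylor coefficient in $\theta$ is polynomial in $\beta$, and the moment is a fixed linear combination of such coefficients. Two polynomials that agree on the infinite set $\{\tfrac12,1,\tfrac32,\dots\}$ of admissible half-integers must coincide, so \eqref{eq:thm:GM} holds throughout the stated range of $\beta$. I expect the one genuinely delicate step to be the bookkeeping of the previous paragraph, where the Wick contractions and the block structure on one side must be matched precisely against the connected components of $G(\mf{m})$; once the structure edges and the matching edges are kept carefully apart, this reduces to a standard contraction count.
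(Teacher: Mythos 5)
Your proof is correct, and it takes a genuinely different route from the paper's. You establish the identity first at $2\beta=p\in\bZ_{>0}$ via the Gaussian representation $W=\sum_{i=1}^{p}X_iX_i^t$ and Wick's theorem, and your key bookkeeping step is right: the sum $\sum_{i_1,\dots,i_n=1}^{2\beta}\prod_{\{p,q\}\in\mf{m}}\delta_{i_{a(p)}i_{a(q)}}$ equals $(2\beta)^{\kappa(\mf{m})}$ because contracting the structure edges $\{2k-1,2k\}$ of $G(\mf{m})$ turns it into the multigraph on blocks cut out by the matching edges without changing the component count (the degenerate case where $\mf{m}$ pairs $2a-1$ with $2a$ gives a vacuous constraint and an isolated component, consistently). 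The extension to general $\beta$ by polynomiality is also sound: both sides are polynomials in $\beta$ --- the left side because $\det(I_d-\theta\sigma)^{-\beta}=\exp\bigl(\beta\sum_{m\ge1}m^{-1}\tr((\theta\sigma)^m)\bigr)$ has Taylor coefficients polynomial in $\beta$, the MGF is finite near $\theta=0$, and the moment is a fixed linear combination of those coefficients --- and they agree on the infinite admissible set of positive half-integers. The paper argues differently: it starts from Proposition \ref{prop:1st} (Proposition 1 of \cite{GLM2}), $\bE[\tr(Ws_1)\cdots\tr(Ws_n)]=\sum_{\pi\in S_n}\beta^{\nu(\pi)}R_\pi(\sigma;s_1,\dots,s_n)$, specializes $s_j=(E_{k_{2j-1}k_{2j}}+E_{k_{2j}k_{2j-1}})/2$, proves the identification \eqref{eq:R_P} of $R_\pi$ with $P_\pi\bigl((\sigma_{k_pk_q})_{p,q\in[2n]}\bigr)$, and then invokes the $\alpha$-hafnian expansion $\hf_\alpha(A)=\sum_{\pi\in S_n}(\alpha/2)^{\nu(\pi)}P_\pi(A)$ of Proposition \ref{prop:AlphaHf}. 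Your route buys elementarity and transparency: Wick calculus plus a contraction count, with $\kappa(\mf{m})$ appearing directly as an index-loop count, and no hafnian machinery. The paper's route buys uniformity in $\beta$ (no continuation step) and, more importantly, reusability: \eqref{eq:R_P} and Proposition \ref{prop:AlphaHf} are recycled verbatim in Lemma \ref{lem:InvGM1} en route to Theorem \ref{thm:InvGM}, where your method has no analogue --- $W^{-1}$ admits no sum-of-rank-ones Gaussian representation, so Wick's theorem does not apply, and there is no integer-parameter family from which to continue.
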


For example, since 
$\kappa(\{ \{1,2\}, \{3,4\} \})=2$ and
$\kappa(\{ \{1,3\}, \{2,4\} \})=\kappa(\{ \{1,4\}, \{2,3\} \})=1$
we have
\begin{equation} \label{eq:2degreeW}
\bE[W_{k_1 k_2} W_{k_3 k_4}]= \beta^2 \sigma_{k_1 k_2} \sigma_{k_3 k_4}
+ \frac{\beta}{2} \sigma_{k_1 k_3} \sigma_{k_2 k_4} 
+ \frac{\beta}{2} \sigma_{k_1 k_4} \sigma_{k_2 k_3}. 
\end{equation}

Theorem \ref{thm:GM} is not new.
Indeed, it is equivalent to Theorem 10 in \cite{GLM2}.
Moreover, Kuriki and Numata \cite{KN} extended it to noncentral Wishart distributions
very recently.
We revisit it in the framework of {\it alpha-hafnians}.
We develop a theory of the alpha-hafnians in Section \ref{sec:hafnians}, 
and apply it to the proof of Theorem \ref{thm:GM} in Section \ref{sec:ProofThm1}.

The following is our main result.
Let $\sigma^{ij}$ be the $(i,j)$-entry of the inverse matrix $\sigma^{-1}$.

\begin{thm} \label{thm:InvGM}
Let $W \sim W_d(\beta,\sigma;\bR)$.
Put $\gamma=\beta-\frac{d+1}{2}$ and suppose 
$\gamma>n-1$.
Given indices $k_1,k_2,\dots,k_{2n}$  from $\{1,\dots,d\}$, we have
\begin{equation} \label{eq:InvGM2}
\bE[W^{k_1 k_2} W^{k_3 k_4}\cdots 
W^{k_{2n-1} k_{2n}}]=
\sum_{\mf{m} \in \mcal{M}(2n)}
\widetilde{\mr{Wg}}(\mf{m}; \gamma)
\prod_{ \{p,q\} \in \mf{m} }
\sigma^{k_p k_q}.
\end{equation}
Here $\widetilde{\mr{Wg}}(\mf{m}; \gamma)$ 
is defined in Section \ref{sec:ProofThm2} below.
\end{thm}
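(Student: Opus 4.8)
The plan is to linearize the inverse matrix by a Gaussian integration, integrate out the Wishart variable, and recognize the surviving finite‑dimensional integral as the orthogonal Weingarten function. Throughout I assume $\beta>\frac{d-1}{2}$ so that the density \eqref{eq:Density} is available (the half‑integer values of $2\beta$ being recovered by continuity), and I use the hypothesis $\gamma>n-1$ precisely to guarantee that the $2n$‑th moment of $W^{-1}$ is finite and that the interchanges of integration below are absolutely convergent.

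First I would introduce $n$ independent auxiliary columns $z^{(1)},\dots,z^{(n)}\in\bR^d$ and apply, one factor at a time, the elementary Gaussian identity
$$
\int_{\bR^d} z_a z_b\, e^{-z^t w z}\,\mathrm{d}z=\tfrac12 (w^{-1})_{ab}\,\pi^{d/2}(\det w)^{-1/2}\qquad(w\in\Omega),
$$
to represent $\prod_{l=1}^n (w^{-1})_{k_{2l-1}k_{2l}}$ as a Gaussian integral over $Z=[z^{(1)},\dots,z^{(n)}]$ carrying an extra factor $(\det w)^{n/2}$. This is exactly where the canonical pairing $\{2l-1,2l\}$, the first half of the edge set of $G(\mf{m})$, enters. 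Substituting into $\bE[\prod_l W^{k_{2l-1}k_{2l}}]=\int_\Omega(\cdots)f(w;d,\beta,\sigma)\,\mf{L}(\mathrm{d}w)$, interchanging the integrations, and integrating out $w$ by the matrix gamma integral underlying \eqref{eq:Density} (with $A=\sigma^{-1}+ZZ^t$, giving $\Gamma_d(\beta+\tfrac n2)(\det A)^{-(\beta+n/2)}$ and $\det A=(\det\sigma)^{-1}\det(I_n+Z^t\sigma Z)$ by the matrix determinant lemma), everything collapses to
$$
\bE\Big[\prod_{l=1}^n W^{k_{2l-1}k_{2l}}\Big]=\frac{2^n}{\pi^{nd/2}}\,\frac{\Gamma_d(\beta+\frac n2)}{\Gamma_d(\beta)}\,(\det\sigma)^{n/2}\int_{\bR^{d\times n}}\Big(\prod_{l=1}^n z^{(l)}_{k_{2l-1}}z^{(l)}_{k_{2l}}\Big)\det(I_n+Z^t\sigma Z)^{-(\beta+n/2)}\,\mathrm{d}Z.
$$

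To evaluate the $Z$‑integral I would insert a second matrix gamma representation $\det(I_n+Z^t\sigma Z)^{-(\beta+n/2)}=\Gamma_n(\beta+\tfrac n2)^{-1}\int_{\Sym^+(n)}(\det T)^{\beta+\frac n2-\frac{n+1}{2}}e^{-\tr T-\tr(Z^t\sigma Z\,T)}\mf{L}(\mathrm{d}T)$, after which the $Z$‑integral is Gaussian with quadratic form $T\otimes\sigma$. Wick's theorem then yields the covariance factorization $\tfrac12(T^{-1})_{lm}\sigma^{ab}$, so the sum over Wick pairings produces
$$
\sum_{\mf{m}\in\mcal{M}(2n)}\Big(\prod_{\{p,q\}\in\mf{m}}\tfrac12\sigma^{k_p k_q}\Big)\Big(\prod_{\{p,q\}\in\mf{m}}(T^{-1})_{l(p)l(q)}\Big),\qquad l(p)=\lceil p/2\rceil,
$$
together with the Gaussian normalization $\pi^{nd/2}(\det T)^{-d/2}(\det\sigma)^{-n/2}$. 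The factor $\prod_{\{p,q\}\in\mf{m}}\sigma^{k_pk_q}$ is exactly the one appearing in \eqref{eq:InvGM2}, and because the superscripts satisfy $l(p)=\lceil p/2\rceil$, the residual $T$‑integral depends on $\mf{m}$ only through the connected‑component structure of $G(\mf{m})$, as a genuine Weingarten quantity must.

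The powers of $\det T$ collapse to $(\det T)^{\gamma}$, so the surviving object is, up to the explicit constants assembled above,
$$
J_{\mf{m}}(\gamma)=\int_{\Sym^+(n)}\Big(\prod_{\{p,q\}\in\mf{m}}(T^{-1})_{l(p)l(q)}\Big)(\det T)^{\gamma}\,e^{-\tr T}\,\mf{L}(\mathrm{d}T),
$$
an inverse‑Wishart moment over $\Sym^+(n)$ with identity scale and gamma‑parameter $\gamma$, convergent exactly when $\gamma>n-1$. Collecting constants one finds $\tWg(\mf{m};\gamma)=\frac{\Gamma_d(\beta+\frac n2)}{\Gamma_d(\beta)\,\Gamma_n(\beta+\frac n2)}\,J_{\mf{m}}(\gamma)$; a reassuring checkpoint is that the apparently $(d,\beta)$‑dependent prefactor simplifies, via the product formula for $\Gamma_d$, to $\pi^{-n(n-1)/4}\prod_{m=2}^{n+1}\Gamma(\gamma+\tfrac m2)^{-1}$, so that $\tWg(\mf{m};\gamma)$ genuinely depends only on $\mf{m}$ and $\gamma$ (I carry out the bookkeeping assuming $d\ge n$, the general case following since the identity is polynomial in the indices). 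The main obstacle is the last identification: one must match this normalized $\Sym^+(n)$‑integral with the prescribed definition of $\tWg(\mf{m};\gamma)$ in Section \ref{sec:ProofThm2}, i.e. connect it to the zonal‑spherical‑function expansion on the Gelfand pair $(S_{2n},H_n)$ via the alpha‑hafnian machinery of Section \ref{sec:hafnians}; this amounts to the content of the theorem in the special case $\sigma=I_n$, while Steps 1–3 reduce the general scale matrix to it.
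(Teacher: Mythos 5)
Your Steps 1--3 are sound as far as they go: the Gaussian representation of the entries of $w^{-1}$, the two matrix-gamma integrals, the determinant identity $\det(\sigma^{-1}+ZZ^t)=(\det\sigma)^{-1}\det(I_n+Z^t\sigma Z)$, and the Wick expansion are all correct; the exponent of $\det T$ does collapse to $\gamma=\beta-\frac{d+1}{2}$, and a degree-one check (your constants give $\Gamma(\gamma)/\Gamma(\gamma+1)=\gamma^{-1}$, matching $\bE[W^{ij}]=\gamma^{-1}\sigma^{ij}$) confirms the bookkeeping. What this establishes is the structural statement
\begin{equation*}
\bE[W^{k_1k_2}\cdots W^{k_{2n-1}k_{2n}}]
=\sum_{\mf{m}\in\mcal{M}(2n)} c(\mf{m};\gamma)\prod_{\{p,q\}\in\mf{m}}\sigma^{k_pk_q},
\qquad
c(\mf{m};\gamma)=\frac{\Gamma_d(\beta+\frac{n}{2})}{\Gamma_d(\beta)\,\Gamma_n(\beta+\frac{n}{2})}\,J_{\mf{m}}(\gamma),
\end{equation*}
with $c(\mf{m};\gamma)$ depending only on the coset-type of $\mf{m}$. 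But Theorem \ref{thm:InvGM} asserts $c(\mf{m};\gamma)=\tWg(\mf{m};\gamma)$ with $\tWg$ given by the zonal-spherical expansion \eqref{eq:tWg}, and you explicitly defer this identification (``the main obstacle'') without proving it. That is not a finishing touch; it is the entire content of the theorem. Indeed $J_{\mf{m}}(\gamma)$ is itself a general moment of an inverse real Wishart matrix (dimension $n$, scale $I_n$, parameter $\gamma+\frac{n+1}{2}$, indices $l(p)=\lceil p/2\rceil$), so your argument reduces the theorem to an instance of itself: nothing in the proposal evaluates $J_{\mf{m}}(\gamma)$ or connects it to the Gelfand pair $(S_{2n},H_n)$.

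The paper closes exactly this gap by a different mechanism, and never computes the moment directly. Lemma \ref{lem:InvGM0} (an integration-by-parts/Stokes identity taken from \cite{GLM1}) expresses the known products $\tr(\sigma^{-1}s_1)\cdots\tr(\sigma^{-1}s_n)$ in terms of the unknown moments of $W^{-1}$; the alpha-hafnian machinery (Proposition \ref{prop:AlphaHf} together with \eqref{eq:R_P}) turns this into Lemma \ref{lem:InvGM1}, which says that the function $g\mapsto(-2\gamma)^{\kappa(g)}$ convolved against the moment function reproduces $\prod\sigma^{k_pk_q}$; and Lemma \ref{lem:Gwg} inverts this convolution in the commutative Hecke algebra $\mcal{H}_n$, which is precisely where $\Wg^O(\cdot\,;-2\gamma)$, hence $\tWg$, enters (the hypothesis $\gamma>n-1$ guarantees $C_\lambda(-2\gamma)\not=0$ for all $\lambda\vdash n$, so the inversion is legitimate). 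To salvage your route you could keep Steps 1--3 and then verify that your coefficients $c(\cdot\,;\gamma)$ satisfy the convolution relation of Lemma \ref{lem:InvGM1}, whence $c=\tWg$ by uniqueness of the inverse of $\mr{G}^O(\cdot\,;-2\gamma)$ in $\mcal{H}_n$; but establishing that relation requires either the Stokes-formula input of Lemma \ref{lem:InvGM0} or an independent evaluation of $J_{\mf{m}}(\gamma)$ via the zonal machinery, neither of which your proposal supplies.
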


For example, for $\gamma>1$ we will see
\begin{align*}
\tWg(\{ \{1,2\}, \{3,4\} \};\gamma)=& \frac{2\gamma-1}{\gamma(\gamma-1)(2\gamma+1)}, \\
\tWg(\{ \{1,3\}, \{2,4\} \};\gamma)= 
\tWg(\{ \{1,4\}, \{2,3\} \};\gamma)=& \frac{1}{\gamma(\gamma-1)(2\gamma+1)},
\end{align*}
and therefore we have
$$
\bE[W^{k_1 k_2} W^{k_3 k_4}]= 
\frac{1}{\gamma(\gamma-1)(2\gamma+1)}( (2\gamma-1) \sigma^{k_1 k_2} \sigma^{k_3 k_4}
+  \sigma^{k_1 k_3} \sigma^{k_2 k_4} 
+  \sigma^{k_1 k_4} \sigma^{k_2 k_3}). 
$$

The quantity $\widetilde{\mr{Wg}}(\mf{m}; \gamma)$
is a slight deformation of the {\it orthogonal Weingarten function}.
The function was introduced by Collins and his coauthors \cite{CM, CS},
in order to compute the general moments 
for a Haar-distributed orthogonal matrix.
In general, 
 $\tWg(\mf{m};\gamma)$ \ $(\mf{m} \in \mcal{M}(2n))$ 
is given by a sum over partitions of $n$, and derived from 
the harmonic analysis of the Gelfand pair $(S_{2n}, H_n)$,
where $S_{2n}$ is the symmetric group and $H_n$ is the hyperoctahedral group.
Amazingly,  the same function thus appears in two different random matrix systems.
In Section \ref{sec:Wg},
we review the theory of the Weingarten function developed in
\cite{CM,Mat2}, 
and, in Section \ref{sec:ProofThm2}, 
we prove Theorem \ref{thm:InvGM}.

In Section \ref{sec:Appl}, we give
applications of Theorem \ref{thm:GM} and Theorem \ref{thm:InvGM}.
In particular, we obtain results of Letac and Massam \cite{LM1} as corollaries
of Theorem \ref{thm:GM} and Theorem \ref{thm:InvGM}.

\section{Alpha-hafnians} \label{sec:hafnians}

\subsection{An expansion formula for alpha-hafnians}

Let $A$ be a $2n \times 2n$ symmetric matrix
$A=(A_{pq})_{p,q \in [2n]}$.
Let $\alpha$ be a complex number. 
We define an {\it $\alpha$-hafnian} of $A$ (see \cite{KN2}) by
$$
\hf_\alpha(A)= \sum_{\mf{m} \in \mcal{M}(2n)} \alpha^{\kappa(\mf{m})}
\prod_{\{p,q\} \in \mf{m}} A_{pq}.
$$
The ordinary hafnian of $A$ is nothing but $\hf_1(A)$.
For example, if $n=2$, 
$$
\hf_\alpha(A)= \alpha^2 A_{12} A_{34} + \alpha A_{13} A_{24} + \alpha A_{14} A_{23}.
$$
We remark that $\hf_\alpha(A)$ does not depend on 
diagonal entries $A_{11},A_{22},\dots,A_{2n,2n}$.
Note that the right hand side in \eqref{eq:thm:GM} is equal to $2^{-n} \hf_{2\beta}
(\sigma_{k_p k_q})_{p,q \in [2n]}$.

\begin{prop} \label{prop:Hfrec}
Let $A=(A_{pq})_{p,q \in [2n]}$ be a symmetric matrix.
Let $D=(A_{pq})_{p,q \in [2n-2]}$.
For each $j=1,2,\dots,2n-2$,
let $B^{(j)}$ be the symmetric matrix obtained by replacing the $j$th row/column of $D$
by the $(2n-1)$th row/column of $A$.
In formulas,
$B^{(j)}=(B^{(j)}_{pq})_{p,q \in [2n-2]}$ is given by
$$
B^{(j)}_{pq}= 
\begin{cases}
A_{2n-1,2n-1} & \text{if $p=j$ and $q=j$,} \\
A_{2n-1, q} & \text{if $p=j$ and $q \not=j$,} \\
A_{p,2n-1} & \text{if $p \not=j$ and $q=j$,} \\
A_{p,q} & \text{if $p \not=j$ and $q \not=j$}.
\end{cases}
$$  
Then we have 
\begin{equation} \label{eq:Hfrec}
\hf_\alpha(A)= \sum_{j=1}^{2n-2} A_{j,2n} \hf_{\alpha} (B^{(j)}) + \alpha A_{2n-1,2n} \hf_\alpha(D).
\end{equation}
\end{prop}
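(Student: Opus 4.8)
The plan is to split the defining sum for $\hf_\alpha(A)$ according to the vertex matched with $2n$ in $\mf{m}$, and to reorganize each piece into an $\alpha$-hafnian of a $(2n-2)\times(2n-2)$ matrix. Every $\mf{m}\in\mcal{M}(2n)$ pairs $2n$ either with $2n-1$ or with some $j\in[2n-2]$, and these two alternatives will produce, respectively, the last term and the sum in \eqref{eq:Hfrec}.

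First I would treat the matchings with $\{2n-1,2n\}\in\mf{m}$. In the graph $G(\mf{m})$ the pair $\{2n-1,2n\}$ then carries both a fixed edge and a matching edge, so the vertices $2n-1,2n$ form a connected component on their own, while the remaining pairs constitute a matching $\mf{m}'\in\mcal{M}(2n-2)$ whose associated graph is exactly $G(\mf{m}')$ built from $D$. Hence $\kappa(\mf{m})=\kappa(\mf{m}')+1$ and the product of entries factors as $A_{2n-1,2n}\prod_{\{p,q\}\in\mf{m}'}A_{pq}$, which after summing over $\mf{m}'$ yields precisely $\alpha A_{2n-1,2n}\hf_\alpha(D)$.

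Next I fix $j\in[2n-2]$ and consider the matchings with $\{j,2n\}\in\mf{m}$. Writing $r$ for the partner of $2n-1$ in $\mf{m}$ (so $r\in[2n-2]$ and $r\neq j$), I would set up the bijection
\[
\mf{m}\longmapsto \mf{m}'=\big(\mf{m}\setminus\{\{j,2n\},\{r,2n-1\}\}\big)\cup\{\{j,r\}\}
\]
between $\{\mf{m}\in\mcal{M}(2n):\{j,2n\}\in\mf{m}\}$ and $\mcal{M}(2n-2)$; its inverse reads off $r$ as the partner of $j$ in $\mf{m}'$ and re-expands $\{j,r\}$ into the path through $2n$ and $2n-1$. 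Under this map the product of entries of $A$ over $\mf{m}$ equals $A_{j,2n}$ times the product of entries of $B^{(j)}$ over $\mf{m}'$: all pairs of $\mf{m}'$ other than $\{j,r\}$ avoid the index $j$ and so contribute the same $A_{pq}$ in both matrices, while the single pair $\{j,r\}$ contributes $B^{(j)}_{jr}=A_{2n-1,r}$, which by symmetry of $A$ is exactly the factor $A_{r,2n-1}$ coming from the pair $\{r,2n-1\}$ of $\mf{m}$. This is the whole point of substituting the $(2n-1)$th row/column into position $j$ when forming $B^{(j)}$.

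It remains to check that the power of $\alpha$ is unchanged, i.e.\ $\kappa(\mf{m})=\kappa(\mf{m}')$; this is the step I expect to require the most care. Since every vertex of $G(\mf{m})$ has exactly two incident edges, $G(\mf{m})$ is a disjoint union of cycles, and $2n,2n-1$ sit on a common cycle as an interior segment $j-2n-(2n{-}1)-r$. Passing to $\mf{m}'$ removes these two degree-two vertices and inserts the direct edge $\{j,r\}$, i.e.\ it smooths the cycle without changing the number of components. The one case to isolate is when $j$ and $r$ are fixed-partners (so $\{j,r\}=\{2k-1,2k\}$ for some $k\le n-1$): then the four-cycle through $j,2n,2n{-}1,r$ collapses to a two-vertex component of $G(\mf{m}')$ carrying a doubled edge, which is again a single component, so $\kappa$ is still preserved. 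Summing over $j$ then produces $\sum_{j=1}^{2n-2}A_{j,2n}\hf_\alpha(B^{(j)})$, and adding the contribution of the first case completes \eqref{eq:Hfrec}.
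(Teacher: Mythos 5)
Your proof is correct and follows essentially the same route as the paper's: both decompose $\mcal{M}(2n)$ according to the partner of $2n$, treat the pair $\{2n-1,2n\}$ separately to yield $\alpha A_{2n-1,2n}\hf_\alpha(D)$, and use the same bijection onto $\mcal{M}(2n-2)$ (delete $\{j,2n\}$ and $\{r,2n-1\}$, insert $\{j,r\}$) with the identical bookkeeping of entry products and of $\kappa$. Your explicit check that $\kappa(\mf{m})=\kappa(\mf{m}')$ via cycle-smoothing, including the doubled-edge case $\{j,r\}=\{2k-1,2k\}$, simply fills in a detail the paper dismisses as easy to see.
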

We call \eqref{eq:Hfrec} an {\it expansion formula for an $\alpha$-hafnian} with respect to 
the $(2n)$th row/column.

\begin{proof}
For each $j=1,2,\dots,2n-1$, we set
$$
\mcal{M}_j(2n)= \{ \mf{m} \in \mcal{M}(2n) \ | \ \{j,2n\} \in \mf{m} \}.
$$
Then $\mcal{M}(2n) = \bigsqcup_{j=1}^{2n-1} \mcal{M}_j(2n)$.
We define a one-to-one map $\mf{m} \mapsto \mf{n}$ 
from $\mcal{M}_j(2n)$ to $\mcal{M}(2n-2)$ as follows.

First, 
suppose $j=2n-1$. Given $\mf{m} \in \mcal{M}_{2n-1}(2n)$, we let
$\mf{n} \in \mcal{M}(2n-2)$ to be the perfect matching 
obtained from $\mf{m}$ by removing the block $\{2n-1,2n\}$.
It is clear that the mapping 
$\mcal{M}_{2n-1}(2n) \ni \mf{m} \mapsto \mf{n} \in \mcal{M}(2n-2)$ is bijective and 
that  $\kappa(\mf{m})= \kappa(\mf{n})+1$.

Next, suppose $j \in [2n-2]$.
Given $\mf{m} \in \mcal{M}_j(2n)$, we let $\mf{n} \in \mcal{M}(2n-2)$ to be
obtained by removing the block $\{j,2n\}$ and a block $\{i,2n-1\}$ 
(with some $i \in [2n-2]$) and by adding $\{i,j\}$. 
It is easy to see that this mapping 
$\mcal{M}_{j}(2n) \ni \mf{m} \mapsto \mf{n} \in \mcal{M}(2n-2)$ is bijective,  
$\kappa(\mf{m})= \kappa(\mf{n})$, and
$\prod_{ \{p,q\} \in \mf{m}} A_{pq}= A_{j,2n} \prod_{\{p,q\} \in \mf{n}} B_{pq}^{(j)}$.

For example, consider $\mf{m}=\{\{1,4\},  \{2,5\},  \{3,6\} \}$.
Then $\mf{m} \in \mcal{M}_3(6)$, and we obtain $\mf{n}=\{\{1,4\}, \{2,3\}\} \in \mcal{M}(4)$.
Therefore, we have $\kappa (\mf{m})=1=\kappa(\mf{n})$ 
and  $\prod_{\{p,q\} \in \mf{m}} A_{pq}=A_{14}A_{25}A_{36} = A_{36} B^{(3)}_{14} B^{(3)}_{23}
=A_{36} \prod_{\{p,q\} \in \mf{n}} B^{(3)}_{pq}$.

Using the correspondence $\mcal{M}_{j}(2n) \ni \mf{m} \leftrightarrow \mf{n} \in \mcal{M}(2n-2)$
with $j=1,2,\dots,2n-1$,
it follows  that
\begin{align*}
\hf_\alpha(A)=& \sum_{j=1}^{2n-1}  A_{j,2n} \sum_{ \mf{m} \in \mcal{M}_j(2n)} \alpha^{\kappa(\mf{m})}
\prod_{\begin{subarray}{c} \{p,q\} \in \mf{m} \\ \{p,q\} \not=\{j,2n\} \end{subarray}}
A_{pq} \\
=& A_{2n-1,2n} \sum_{\mf{n} \in \mcal{M}(2n-2)} \alpha^{\kappa(\mf{n})+1}
\prod_{\{p,q\} \in \mf{n}} A_{pq} \\
&+ \sum_{j=1}^{2n-2} A_{j,2n} 
\sum_{\mf{n} \in \mcal{M}(2n-2)} \alpha^{\kappa(\mf{n})} 
\prod_{\{p,q\} \in \mf{n}}
 B_{pq}^{(j)},
\end{align*}
which is equal to $A_{2n-1,2n} \alpha \cdot \hf_\alpha(D) 
+\sum_{j=1}^{2n-2}A_{j,2n}  \hf_\alpha(B^{(j)})$.
\end{proof}

\subsection{Another expression for $\alpha$-hafnians}

Let $S_{n}$ be the symmetric group on $[n]$.
Each permutation $\pi$ is uniquely decomposed into a product of cycles.
For example, $\pi=\( \begin{smallmatrix} 1 & 2 & 3 & 4 & 5 & 6 \\
5 & 6 & 1 & 4 & 3 & 2 \end{smallmatrix} \) \in S_6$ is expressed as
$\pi=(1 \to 5 \to 3 \to 1)(2 \to 6 \to 2)(4 \to 4)$. 
Denote by $C(\pi)$ the set of all cycles of $\pi$, and let $\nu(\pi)$ be
the number of cycles of $\pi$: $\nu(\pi)=|C(\pi)|$.

Let $A=(A_{pq})_{p,q \in [2n]}$ be a symmetric matrix.
For each $k,l \in [n]$, we denote by $A[k,l]$ the $2 \times 2$ matrix
$$
A[k,l]= \begin{pmatrix} A_{2k-1,2l-1} & A_{2k-1, 2l} \\ A_{2k,2l-1} & A_{2k,2l} \end{pmatrix}.
$$
For a cycle $c=(c_r \to c_1 \to c_2 \to \dots \to c_r)$ on $\{1,\dots,n\}$, we put
$$
P_{c}(A)=
\tr (A[c_1, c_2] J A[c_2, c_3] J \cdots A[c_r, c_1] J), 
\qquad \text{with $J= \begin{pmatrix} 0 & 1 \\ 1 & 0 \end{pmatrix}$}.
$$
In particular,
$P_{(c_1 \to c_1)}(A)=\tr(A[c_1,c_1] J)=2A_{2c_1-1,2c_1}$
for a $1$-cycle $(c_1 \to c_1)$.
It is easy to see that 
$P_c(A)$ can be written
\begin{equation}
P_c(A)
= \sum_{j_1, j_2,\dots,j_{2r}} A_{j_{2r}, j_1} A_{j_2,j_3} \cdots A_{j_{2r-2},j_{2r-1}} 
\end{equation}
summed over 
$(j_{2k-1},j_{2k}) \in \{(2c_k-1,2c_k), (2c_k,2c_k-1)\}$ \ $(k=1,2,\dots,{r})$. 
For a permutation $\pi \in S_n$, we define
$$
P_\pi(A)= \prod_{ c \in C(\pi)} P_{c}(A).
$$

Similarly, given an $r$-cycle $c=(c_r \to c_1 \to c_2 \to \cdots \to c_r)$,  
we let $c_r$ to be the largest number among $\{c_1,c_2,\dots,c_r\}$.
We define $Q_c(A)$ as follows:
If $r=1$ then $Q_c(A)= A_{2c_1-1,2c_1}$; if $r \ge 2$ then
$$
Q_c(A)= \sum_{(j_1,j_2)} \cdots \sum_{(j_{2r-3}, j_{2r-2})}
A_{2c_r-1, j_1} A_{j_2 j_3} A_{j_4 j_5} \cdots A_{j_{2r-2}, 2c_{r}},
$$
summed over 
$(j_{2k-1},j_{2k}) \in \{(2c_k-1,2c_k), (2c_k,2c_k-1)\}$ \ $(k=1,2,\dots,{r-1})$. 
As $P_\pi(A)$, we define
$$
Q_\pi(A)= \prod_{ c \in C(\pi)} Q_{c}(A).
$$
For example, for a cycle $(3 \to 2 \to 1 \to 3)$,
we have
\begin{align*}
Q_{c}(A)=& \sum_{(j_1,j_2) \in \{ (3,4), (4,3) \}}
\sum_{(j_3,j_4) \in \{ (1,2), (2,1) \}} A_{5 j_1} A_{j_2 j_3} A_{j_4 6} \\
=& A_{53} A_{41} A_{26} + A_{54} A_{31} A_{26} +A_{53} A_{42} A_{16} + A_{54} A_{32} A_{16}.
\end{align*}  

\begin{lem} \label{lem:PQ}
Let $c= (c_r \to c_1 \to c_2 \to \cdots \to c_r)$ be a cycle.
Then
$$
P_c(A)= Q_c(A)+ Q_{c^{-1}}(A),
$$
where $c^{-1}= (c_r \to \cdots \to c_2 \to c_1 \to c_r)$.
\end{lem}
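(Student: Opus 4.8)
The plan is to work from the explicit index expansion of $P_c(A)$ already recorded above, namely
\begin{equation*}
P_c(A)= \sum A_{j_{2r}, j_1} A_{j_2,j_3} \cdots A_{j_{2r-2},j_{2r-1}},
\end{equation*}
the sum being over all choices $(j_{2k-1},j_{2k}) \in \{(2c_k-1,2c_k),(2c_k,2c_k-1)\}$ for $k=1,\dots,r$. Thus each summand is indexed by a sign vector $\epsilon=(\epsilon_1,\dots,\epsilon_r) \in \{+,-\}^r$, where $\epsilon_k=+$ selects $(2c_k-1,2c_k)$ and $\epsilon_k=-$ selects $(2c_k,2c_k-1)$. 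The idea is to split this sum of $2^r$ terms according to the single sign $\epsilon_r$ attached to the distinguished (largest) vertex $c_r$, and to match the two halves with $Q_c(A)$ and $Q_{c^{-1}}(A)$ respectively. The base case $r=1$ is immediate, since there $P_c(A)=\tr(A[c_1,c_1]J)=2A_{2c_1-1,2c_1}$ while $Q_c(A)=Q_{c^{-1}}(A)=A_{2c_1-1,2c_1}$, so I assume $r \ge 2$ below.

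First I would treat the half with $\epsilon_r=-$, i.e.\ $(j_{2r-1},j_{2r})=(2c_r,2c_r-1)$. Substituting these two values into the generic summand turns the wrap-around factor $A_{j_{2r},j_1}$ into $A_{2c_r-1,j_1}$ and the last factor $A_{j_{2r-2},j_{2r-1}}$ into $A_{j_{2r-2},2c_r}$, while the remaining indices still range over the pairs attached to $c_1,\dots,c_{r-1}$. Comparing with the definition of $Q_c(A)$, this is exactly $Q_c(A)$; the match is literal and requires no manipulation.

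The substantive half is $\epsilon_r=+$, where the summand becomes $A_{2c_r,j_1}A_{j_2,j_3}\cdots A_{j_{2r-2},2c_r-1}$. Here I would invoke the symmetry $A_{pq}=A_{qp}$ to reverse the product factor by factor, rewriting it as $A_{2c_r-1,j_{2r-2}}A_{j_{2r-3},j_{2r-4}}\cdots A_{j_1,2c_r}$. Read as a chain this starts at $2c_r-1$, traverses the pairs attached to $c_{r-1},c_{r-2},\dots,c_1$ in that order, and ends at $2c_r$, which is precisely the traversal encoded by the reversed cycle $c^{-1}=(c_r \to c_{r-1} \to \cdots \to c_1 \to c_r)$ whose distinguished largest vertex is again $c_r$. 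It is this shared anchoring at $c_r$---guaranteed by the convention that $c_r$ is the largest element, hence distinguished in both $c$ and $c^{-1}$---that makes the two halves of the split correspond respectively to $Q_c$ and $Q_{c^{-1}}$.

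The one point demanding care, and the place I expect the bookkeeping to bite, is verifying that the reversal in the $\epsilon_r=+$ half sets up a bijection between the sign configurations of $P_c$ (restricted to $\epsilon_1,\dots,\epsilon_{r-1}$) and those of $Q_{c^{-1}}$. Concretely, at each intermediate vertex the reversal interchanges the roles of the entering and exiting index of its pair, so the orientation must be tracked through the relabeling; since the sum runs over both orientations at every vertex, the two index sets coincide, and one checks that whenever the entering index at a vertex is fixed the exiting index is forced to the same value on both sides (the reversal simply flipping the sign at each intermediate vertex). Granting this bijection, the $\epsilon_r=+$ half equals $Q_{c^{-1}}(A)$, and adding the two halves yields $P_c(A)=Q_c(A)+Q_{c^{-1}}(A)$.
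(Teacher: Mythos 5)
Your proof is correct and takes essentially the same route as the paper: both split the $2^r$-term expansion of $P_c(A)$ according to the orientation $(j_{2r-1},j_{2r})$ chosen at the distinguished largest vertex $c_r$, identify the half with $(j_{2r-1},j_{2r})=(2c_r,2c_r-1)$ literally with $Q_c(A)$, and match the other half with $Q_{c^{-1}}(A)$ by reversing the product using the symmetry of $A$. The only difference is one of detail: the paper merely asserts that the second half coincides with $Q_{c^{-1}}(A)$, whereas you explicitly verify the orientation-flipping relabeling $(j'_{2k-1},j'_{2k})=(j_{2(r-k)},j_{2(r-k)-1})$ that makes this identification a bijection.
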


\begin{proof}
Suppose $c_r$ is the largest number in $\{c_1,\dots, c_r\}$.
We can express
$$
P_c(A)= \sum_{j_1,j_2,\dots,j_{2r-2}}
A_{2c_r-1, j_1} A_{j_2,j_3} \cdots A_{j_{2r-2},2c_r} +
\sum_{j_1,j_2,\dots,j_{2r-2}}
A_{2c_r, j_1} A_{j_2,j_3} \cdots A_{j_{2r-2},2c_r-1},
$$
summed over 
$(j_{2k-1},j_{2k}) \in \{(2c_k-1,2c_k), (2c_k,2c_k-1)\}$ \ $(k=1,2,\dots,{r-1})$. 
Here the first sum coincides with $Q_c(A)$, while the second one does with $Q_{c^{-1}}(A)$.
\end{proof}

\begin{prop} \label{prop:AlphaHf}
Let $A=(A_{pq})_{p,q \in [2n]}$ be a symmetric matrix.
Then
$$
\hf_\alpha(A) = 
\sum_{\pi \in S_n} \(\frac{\alpha}{2}\)^{\nu(\pi)} P_{\pi}(A)
=\sum_{\pi \in S_n} \alpha^{\nu(\pi)} Q_{\pi}(A).
$$ 
\end{prop}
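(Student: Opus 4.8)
The plan is to prove the first equality $\hf_\alpha(A) = \sum_{\pi \in S_n} (\alpha/2)^{\nu(\pi)} P_\pi(A)$ by a direct weight-counting argument, and then to deduce the second equality $\sum_\pi (\alpha/2)^{\nu(\pi)} P_\pi(A) = \sum_\pi \alpha^{\nu(\pi)} Q_\pi(A)$ from Lemma \ref{lem:PQ}. First I would expand each factor of $P_\pi(A) = \prod_{c \in C(\pi)} P_c(A)$ via the given summation formula: a single $r$-cycle $c = (c_r \to c_1 \to \cdots \to c_r)$ unfolds into $2^r$ terms, one for each choice of \emph{orientation} $(j_{2k-1}, j_{2k}) \in \{(2c_k-1, 2c_k), (2c_k, 2c_k-1)\}$ of the $k$-th pair $\{2c_k-1, 2c_k\}$ (a \emph{domino}). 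Thus every term occurring in $\sum_\pi (\alpha/2)^{\nu(\pi)} P_\pi(A)$ is indexed by a datum consisting of a permutation $\pi \in S_n$ together with an orientation of each of the $n$ dominoes, and it equals $(\alpha/2)^{\nu(\pi)}$ times a product of off-diagonal entries $A_{pq}$ whose index pairs $\{p,q\}$ are exactly the edges obtained by joining the \emph{out} point of each domino to the \emph{in} point of the next domino along the cycle $c$.

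For a fixed such datum these edges form a genuine perfect matching $\mf{m} \in \mcal{M}(2n)$, and I would check that each cycle of $\pi$ becomes exactly one connected component of the graph $G(\mf{m})$, so that $\kappa(\mf{m}) = \nu(\pi)$ and the weight of the datum is $(\alpha/2)^{\kappa(\mf{m})} \prod_{\{p,q\}\in\mf{m}} A_{pq}$. Summing over all data then rewrites $\sum_\pi (\alpha/2)^{\nu(\pi)} P_\pi(A)$ as $\sum_{\mf{m}} N(\mf{m})\, (\alpha/2)^{\kappa(\mf{m})} \prod_{\{p,q\}\in\mf{m}} A_{pq}$, where $N(\mf{m})$ is the number of data producing $\mf{m}$. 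Comparing with the definition of $\hf_\alpha(A)$, the whole identity reduces to the single claim $N(\mf{m}) = 2^{\kappa(\mf{m})}$.

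\textbf{The main obstacle is precisely this count}, because the data and the matching carry different symmetries and the short cycles must be treated separately. I would prove $N(\mf{m}) = 2^{\kappa(\mf{m})}$ one component at a time, using the fixed-point-free involution that, on a chosen component, reverses the traversal of the corresponding cycle (replacing $c$ by $c^{-1}$) and simultaneously flips the orientation of every domino in it, leaving $\mf{m}$ unchanged. The delicate point is to confirm that each component admits \emph{exactly two} such data rather than merely an even number: for a component on $r \ge 3$ dominoes the two data come from the distinct cyclic orders $c$ and $c^{-1}$, for each of which the orientations are then forced by $\mf{m}$; for $r = 2$ one has $c = c^{-1}$ and the factor two instead comes from the two orientations interchanged by the involution; and for $r = 1$ (a domino matched to itself, where $P_{(c_1)} = 2A_{2c_1-1,2c_1}$) it comes from the two orientations of that single domino. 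Since the components are independent, $N(\mf{m}) = 2^{\kappa(\mf{m})}$, the factor $(\alpha/2)^{\kappa(\mf{m})}$ cancels it to $\alpha^{\kappa(\mf{m})}$, and the first equality follows.

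Finally I would obtain the second equality from Lemma \ref{lem:PQ}. Writing $P_\pi(A) = \prod_{c \in C(\pi)} (Q_c(A) + Q_{c^{-1}}(A))$ and expanding, each summand is $Q_{\pi'}(A)$ for the permutation $\pi'$ obtained by independently re-orienting the cycles of $\pi$, with $\nu(\pi') = \nu(\pi)$. A given $\pi'$ arises from exactly $2^{\nu(\pi')}$ pairs (a $\pi$ and a re-orientation choice) — two per cycle in every case, whether or not that cycle is self-inverse — so the coefficient of $Q_{\pi'}(A)$ is $(\alpha/2)^{\nu(\pi')} \cdot 2^{\nu(\pi')} = \alpha^{\nu(\pi')}$, which is the asserted expression. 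The same uniform ``factor two per cycle'' underlies both steps and is the point that most repays careful checking.
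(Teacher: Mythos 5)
Your proof is correct, but it follows a genuinely different route from the paper's. The paper proceeds by induction on $n$: it defines $\widetilde{\hf}_\alpha(A)$ as the common value of your two sums (their equality, your final paragraph, is obtained there exactly as you do, from Lemma \ref{lem:PQ}), and then shows that $\widetilde{\hf}_\alpha$ satisfies the same expansion formula with respect to the last row/column, \eqref{eq:Hfrectilde}, that $\hf_\alpha$ was shown to satisfy in Proposition \ref{prop:Hfrec}; the inductive step isolates the cycle $u_n(\pi)$ through the letter $n$, pairs $\pi$ with its partial reversal $\tilde{\pi}$ on each $S_n^{(k)}$, and uses the identity \eqref{eq:QrecP2} to peel off one domino. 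You instead prove the first equality globally and non-inductively: expand each $P_c(A)$ over the $2^r$ orientations of the dominoes $\{2k-1,2k\}$, send each (permutation, orientation) datum to the perfect matching formed by the out-to-in edges, check $\kappa(\mf{m})=\nu(\pi)$ componentwise, and count the fiber over each $\mf{m}$ as exactly $2^{\kappa(\mf{m})}$, which cancels the $2^{-\nu(\pi)}$ in the weight. Your identification of the fiber count as the crux is apt, and your case analysis is right: for $r\ge 3$ the two data are the two traversal directions $c$ and $c^{-1}$ with forced orientations, while for $r=1,2$ (where $c=c^{-1}$) the factor two comes from the orientation flip instead — this is the same short-cycle subtlety the paper encounters in the guise of its remark that $\tilde{\pi}=\pi$ exactly when $u_n(\pi)$ is a $2$- or $3$-cycle. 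Likewise your coefficient count $2^{\nu(\pi')}$ in the $P$-to-$Q$ conversion, with two choices per cycle even for self-inverse cycles (where $P_c=2Q_c$), is correct. What your approach buys is a self-contained bijective proof that makes the weight $(\alpha/2)^{\nu(\pi)}$ conceptually transparent (one two-fold reversal symmetry per component) and needs no recurrence at all; what the paper's approach buys is that all orientation bookkeeping is localized to a single cycle per inductive step, reusing the expansion formula of Proposition \ref{prop:Hfrec}, which the paper has already established and which fits its recursive treatment of hafnians elsewhere. Both are complete proofs; yours could replace the paper's without loss.
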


This is a key lemma in our proofs of Theorem \ref{thm:GM} and Theorem \ref{thm:InvGM}.
We show this proposition in the next subsection.

\begin{remark}
Let $A=(A_{ij})_{1 \le i,j \le n}$ be a complex matrix
and $\alpha$ a complex number.
An {\it $\alpha$-permanent} of $A$ is defined by
$$
\mr{per}_\alpha (A)= \sum_{\pi \in S_n} \alpha^{\nu(\pi)} \prod_{i=1}^n A_{i \pi(i)}.
$$
It intertwines the permanent and determinant:
$$
\mr{per}_1(A)= \mr{per}(A) = \sum_{\pi \in S_n} \prod_{i=1}^n A_{i \pi(i)}
\qquad \text{and} \qquad
\mr{per}_{-1}(A)= (-1)^n \det(A).
$$
It is also called an {\it $\alpha$-determinant}.
See \cite{VJ} and also \cite{Shirai}.
Alpha-hafnians are generalizations of the alpha-permanents in the following sense.
Given a matrix $A=(A_{ij})_{1 \le i,j \le n}$, we define the $2n \times 2n$ symmetric matrix
$B=(B_{pq})_{1 \le p,q \le 2n}$ by 
$$
B_{2i-1,2j-1}=B_{2i,2j}=0 \qquad \text{and} \qquad
 B_{2i-1,2j}=B_{2j,2i-1}=A_{ij} \qquad \text{for all 
$i,j=1,2,\dots,n$}.
$$
Then, since $Q_c(B)=A_{c_r, c_1}A_{c_1, c_2} \dots A_{c_{r-1}, c_r}$ for 
$c= (c_r \to c_1 \to c_2 \to \cdots \to c_r)$, 
it follows from Proposition \ref{prop:AlphaHf} that $\hf_\alpha(B)=\mr{per}_\alpha(A)$.
Thus any $\alpha$-permanent can be given by an $\alpha$-hafnian.
\end{remark}

\begin{remark}
Let $B=(B_{pq})_{p,q \in [2n]}$ be a skew-symmetric matrix and let 
$\alpha$ be a complex number.
In \cite{Mat1}, an $\alpha$-pfaffian of $B$ was defined.
In a similar way to the proof of Proposition  \ref{prop:AlphaHf},
we can see that the definition in \cite{Mat1} is equivalent to the expression
$$
\mr{pf}_\alpha(B)=\sum_{\mf{m} \in \mcal{M}(2n)} (-\alpha)^{\kappa(\mf{m})}
\sgn(\mf{m}) \prod_{\{p,q\} \in \mcal{M}(2n)} B_{pq}.
$$
Here, for $\mf{m}=\{ \{\mf{m}(1), \mf{m}(2)\},\dots, \{\mf{m}(2n-1),\mf{m}(2n)\}\}$
we define 
$$
\sgn(\mf{m}) \prod_{\{p,q\} \in \mcal{M}(2n)} B_{pq}
=\sgn \begin{pmatrix} 1 & 2& \cdots &2n \\ \mf{m}(1) & \mf{m}(2) & \cdots & \mf{m}(2n) 
\end{pmatrix} \cdot
B_{\mf{m}(1) \mf{m}(2)} \cdots B_{\mf{m}(2n-1) \mf{m}(2n)}.
$$
When $\alpha=-1$, the $\alpha$-pfaffian is exactly the ordinary pfaffian.
Moreover, as $\alpha$-hafnians are so, 
the $\alpha$-pfaffians are generalizations of $\alpha$-permanents.
\end{remark}

\subsection{Proof of Proposition \ref{prop:AlphaHf}}

Put
$$
\widetilde{\hf}_\alpha(A)=\sum_{\pi \in S_n} \(\frac{\alpha}{2}\)^{\nu(\pi)} P_{\pi}(A)
=\sum_{\pi \in S_n} \alpha^{\nu(\pi)} Q_{\pi}(A)
$$
for any $n \ge 1$ and any symmetric matrix $A$ of size $2n$.
Here the second equality follows from Lemma \ref{lem:PQ}.

Let $B^{(1)}, B^{(2)},\dots,B^{(2n-2)},D$ be as in Proposition \ref{prop:Hfrec}.
In order to obtain  Proposition \ref{prop:AlphaHf},
it is enough to show the recurrence formula
\begin{equation} \label{eq:Hfrectilde}
\widetilde{\hf}_\alpha(A)
=\sum_{j=1}^{2n-2} A_{j,2n} \widetilde{\hf}_{\alpha} (B^{(j)}) + \alpha A_{2n-1,2n} 
\widetilde{\hf}_\alpha(D).
\end{equation}

To see \eqref{eq:Hfrectilde},
we will show a recurrence formula involving $Q_c(A)$ and $P_c(A)$.
For each $k \in [n]$, we denote by 
$S_n^{(k)}$ the subset of permutations in $S_n$ such that $\pi(k)=n$. 
Note $S_n= \bigsqcup_{k=1}^n S_n^{(k)}$.

Let $k \in [n-1]$ and let $\pi \in S_n^{(k)}$. 
Let $u_n(\pi) \in C(\pi)$ be the cycle including the letter $n$, which is of the form
$$
u_n(\pi)=(n \to c_1 \to c_2 \to \cdots \to c_r \to k \to n),
$$
with  (possibly empty) distinct $c_1,\dots,c_r \in [n] \setminus\{k,n\}$. 
Then, define
$$
\tilde{u}_n(\pi)= (n \to c_r \to \cdots \to c_2 \to c_1 \to k \to n),
$$
and let $\tilde{\pi}$ be the permutation obtained by replacing 
$u_n(\pi)$ in $\pi$ by $\tilde{u}_n(\pi)$.
Note that $u_n(\tilde{\pi})=\tilde{u}_n(\pi)$, and that
$\tilde{\pi}=\pi$ if and only if $u_n(\pi)$ is a $2$ or $3$-cycle.
The map $\pi \mapsto \tilde{\pi}$ is an involution on  $S_n^{(k)}$.
For example, given $\pi=(7 \to 3 \to 1 \to 2 \to 7)(6 \to 4 \to 6)(5 \to 5) \in S_7$, 
we have  
$\tilde{\pi}=(7 \to 1 \to 3 \to 2 \to 7)(6 \to 4 \to 6)(5 \to 5)$.

In general, for the cycle $u_n(\pi)=(n \to c_1 \to c_2 \to \cdots \to c_r \to k \to n)$ 
with $k \not=n$,
we see that
\begin{align*}
Q_{u_n(\pi)}(A)=& \sum_{(j_1,j_2)} \cdots \sum_{(j_{2r-1},j_{2r})}  
(A_{2n-1, j_1} A_{j_2,j_3} \cdots A_{j_{2r}, 2k-1} A_{2k,2n} +
A_{2n-1, j_1} A_{j_2,j_3} \cdots A_{j_{2r}, 2k} A_{2k-1,2n}) \\
=& \sum_{(j_1,j_2)} \cdots \sum_{(j_{2r-1},j_{2r})}
(B^{(2k)}_{2k, j_1} B^{(2k)}_{j_2,j_3} \cdots B^{(2k)}_{j_{2r},2k-1} A_{2k,2n}
+B^{(2k-1)}_{2k-1, j_1} B^{(2k-1)}_{j_2,j_3} \cdots B^{(2k-1)}_{j_{2r},2k} A_{2k-1,2n})
\end{align*}
summed over 
$$
(j_{2p-1},j_{2p}) \in \{(2c_p-1,2c_p), (2c_p,2c_p-1)\} \qquad (p=1,2,\dots,{r}).
$$
Similarly,
\begin{align*}
Q_{\tilde{u}_n(\pi)}(A)=& 
\sum_{(j_1,j_2)} \cdots \sum_{(j_{2r-1},j_{2r})}  
(A_{2n-1, j_{2r}} \cdots A_{j_3, j_2} A_{j_1,2k-1} A_{2k,2n} +
A_{2n-1, j_{2r}} \cdots A_{j_3, j_2} A_{j_1,2k} A_{2k-1,2n}) \\
=&
\sum_{(j_1,j_2)} \cdots \sum_{(j_{2r-1},j_{2r})}  
(B_{2k, j_{2r}}^{(2k)} \cdots B^{(2k)}_{j_3, j_2} B^{(2k)}_{j_1,2k-1} A_{2k,2n} +
B^{(2k-1)}_{2k-1, j_{2r}} \cdots B^{(2k-1)}_{j_3, j_2} B^{(2k-1)}_{j_1,2k} A_{2k-1,2n}).
\end{align*}
Therefore, we have
\begin{equation} \label{eq:QrecP2}
Q_{u_n(\pi)}(A)+Q_{u_n(\tilde{\pi})}(A)= A_{2k,2n} 
P_{u_n'(\pi)}(B^{(2k)}) +A_{2k-1,2n}P_{u_n'(\pi)}(B^{(2k-1)}).
\end{equation}
Here $u_n'(\pi)$ is the cycle obtained from $u_n(\pi)$ by removing the letter $n$:
$u_n'(\pi)=(k \to c_1 \to c_2 \to \cdots \to c_r \to k)$.
We note that
the mapping $\pi \mapsto
\pi' :=u_n'(\pi) \prod_{c \in C(\pi) \setminus \{u_n(\pi)\}} c$
is the bijective map from $S_n^{(k)}$ to $S_{n-1}$,
and that $\nu(\pi)=\nu(\pi')$.

Now we go back to the proof of \eqref{eq:Hfrectilde}.
We rewrite 
$$
\widetilde{\hf}_\alpha(A)=
\sum_{\pi \in S_n^{(n)}} \alpha^{\nu(\pi)} Q_\pi(A) +
\sum_{k=1}^{n-1}\sum_{\pi \in S_n^{(k)}} \alpha^{\nu(\pi)} Q_{u_n(\pi)}(A) 
2^{-(\nu(\pi)-1)}\prod_{c \in C(\pi) \setminus 
\{u_n(\pi)\} } P_c(A).
$$
The first sum is equal to
$$
\sum_{\pi' \in S_n} \alpha^{\nu(\pi')+1} Q_{\pi'}(A) Q_{(n)}(A)= \alpha A_{2n-1,2n} 
\hf_\alpha(D)
$$
by a natural bijective map $S_n^{(n)} \to S_{n-1}$,
while, since the map $\pi \mapsto \tilde{\pi}$ is bijective on each $S_{2n}^{(k)}$, 
the terms corresponding to $k \in [n-1]$ in the second sum are equal to
\begin{align*}
& 
\sum_{\pi \in S_n^{(k)} } \(\frac{\alpha}{2}\)^{\nu(\pi)} 
(Q_{u_n(\pi)}(A)+Q_{u_n(\tilde{\pi})}(A))
\prod_{c \in C(\pi) \setminus \{u_n(\pi)\}}
P_c(A)  \\
=&  \sum_{\pi \in S_n^{(k)}} \(\frac{\alpha}{2}\)^{\nu(\pi)}  
(A_{2k,2n} P_{u_n'(\pi)}(B^{(2k)}) +A_{2k-1,2n} P_{u_n'(\pi)}(B^{(2k-1)}))
\prod_{c \in C(\pi) \setminus \{u_n(\pi)\}}
P_c(A) \\
=&   \sum_{\pi' \in S_{n-1}} 
\(\frac{\alpha}{2}\)^{\nu(\pi')}  
(A_{2k,2n} P_{\pi'}(B^{(2k)}) +A_{2k-1,2n} P_{\pi'}(B^{(2k-1)})) \\
=&  A_{2k,2n} \widetilde{\hf}_\alpha(B^{(2k)})+ 
A_{2k-1,2n} \widetilde{\hf}_\alpha(B^{(2k-1)}).
\end{align*}
Here the first equality follows by \eqref{eq:QrecP2}, and
the second equality follows from the bijection $S_n^{(k)} \ni \pi \mapsto
\pi' =u_n'(\pi) \prod_{c \in C(\pi) \setminus \{u_n(\pi)\}} c \in S_{n-1}$.
Hence \eqref{eq:Hfrectilde} follows, and we end the proof of 
Proposition \ref{prop:AlphaHf}.

\section{Proof of Theorem \ref{thm:GM}} \label{sec:ProofThm1}

Let $m_1,\dots,m_n$ and $x$ be $d \times d$ matrices. 
Given a cycle $c=(c_r \to c_1 \to c_2 \to \cdots \to c_r)$ on $[n]$,
we define 
$$
R_c(x;m_1,\dots,m_n) = \tr \( x m_{c_1} x m_{c_2} \cdots x m_{c_r}\).
$$
More generally, for a permutation $\pi \in S_n$, we define
$$
R_\pi(x;m_1,\dots,m_n)= \prod_{c \in C(\pi)} R_c(x;m_1,\dots,m_n).
$$
For example, if $n=6$ and $\pi=(1 \to 5 \to 3  \to 1)(2 \to 6 \to 2)(4 \to 4)$, then
$$
R_\pi(x;m_1,m_2,m_3,m_4,m_5,m_6) = \tr(x m_1 x m_5 x m_3) \tr(x m_2 x m_6) \tr(x m_4).
$$

The following proposition, given in  \cite{GLM2},
is our starting point for the proof of Theorem\,\ref{thm:GM}.
Let $d, \beta, \sigma$ be as in Introduction.

\begin{prop} \label{prop:1st}
Let $W \sim W_d(\beta,\sigma;\bR)$ and let $s_1,\dots,s_n \in \Sym(d)$.
Then
$$
\bE [ \tr(W s_1) \tr (W s_2) \cdots \tr (W s_n) ] = \sum_{\pi \in S_n} \beta^{\nu(\pi)}
R_{\pi}(\sigma;s_1,\dots,s_n).
$$
\end{prop}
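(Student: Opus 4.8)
The plan is to reduce everything to the single-trace moment generating function that defines the Wishart distribution, namely $\bE[e^{\tr(\theta W)}] = \det(I_d - \theta\sigma)^{-\beta}$, and to extract the multilinear coefficient in $s_1,\dots,s_n$ by differentiation. Concretely, I would introduce formal scalar parameters $t_1,\dots,t_n$ and set $\theta = t_1 s_1 + \cdots + t_n s_n$ (each $s_i$ is symmetric, so this $\theta$ is an admissible symmetric argument for small $t_i$). Then
\begin{equation*}
\bE\left[\prod_{i=1}^n \tr(W s_i)\right] = \left.\frac{\partial^n}{\partial t_1 \cdots \partial t_n}\, \bE\big[e^{\tr(\theta W)}\big]\right|_{t_1=\cdots=t_n=0} = \left.\frac{\partial^n}{\partial t_1 \cdots \partial t_n}\, \det(I_d - \theta\sigma)^{-\beta}\right|_{t=0}.
\end{equation*}
The problem is thus purely about differentiating the right-hand side, with no probability left.

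Next I would take logarithms to linearize the determinant power. Writing $\det(I_d-\theta\sigma)^{-\beta} = \exp\big(-\beta\,\tr\log(I_d-\theta\sigma)\big)$ and expanding the logarithm as the trace power series $-\tr\log(I_d-\theta\sigma) = \sum_{r\ge 1} \tfrac{1}{r}\tr((\theta\sigma)^r)$, I obtain
\begin{equation*}
\det(I_d-\theta\sigma)^{-\beta} = \exp\left(\beta \sum_{r \ge 1} \frac{1}{r} \tr\big((\theta\sigma)^r\big)\right).
\end{equation*}
Substituting $\theta = \sum_i t_i s_i$ and collecting the coefficient of $t_1 t_2 \cdots t_n$ is now a combinatorial exercise: $\tr((\theta\sigma)^r)$ expands into a sum of terms $\tr(s_{a_1}\sigma s_{a_2}\sigma\cdots s_{a_r}\sigma)$ times $t_{a_1}\cdots t_{a_r}$, which is exactly $R_c(\sigma; s_1,\dots,s_n)$ for the corresponding $r$-cycle $c$ on the letters $\{a_1,\dots,a_r\}$ (using the cyclic invariance of the trace and the symmetry of $\sigma$ and the $s_i$ to match the definition of $R_c$). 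The exponential then produces products over disjoint cycles, and extracting the fully multilinear term $t_1\cdots t_n$ forces the supports of these cycles to partition $[n]$ exactly once each; each cycle on a subset $S$ with $|S|=r$ contributes a factor $\beta \cdot \tfrac{1}{r} \cdot (\text{sum of trace terms})$, and the $\tfrac{1}{r}$ together with the multinomial count of orderings reassembles precisely into a sum over permutations $\pi\in S_n$ weighted by $\beta^{\nu(\pi)}$.

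The main obstacle I anticipate is the bookkeeping in that last step: verifying that the combinatorial factors work out so that the sum over set partitions of $[n]$ into cycles, with the $1/r$ weights from the logarithm and the symmetry factors from the exponential, collapses cleanly to $\sum_{\pi\in S_n}\beta^{\nu(\pi)} R_\pi(\sigma;s_1,\dots,s_n)$. The key identity is that the number of distinct $r$-cycles on a fixed $r$-element set is $(r-1)!$, and that each such cyclic ordering corresponds to exactly one cyclically distinct trace term $\tr(s_{a_1}\sigma\cdots s_{a_r}\sigma)$; the factor $(r-1)!\cdot \tfrac{1}{r}\cdot r = (r-1)!$ from combining the logarithm coefficient, the cyclic symmetry of the trace, and the choices of linear orders must be tracked carefully. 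Since $\nu(\pi)$ counts cycles and $R_\pi$ is multiplicative over cycles, matching the exponential's "product over blocks" structure to the "product over cycles" structure of $R_\pi$ is the heart of the argument; once the per-cycle factor is pinned down as $\beta$ times the trace monomial, the result follows. (Alternatively, one could cite the cumulant interpretation: the logarithm of the moment generating function has coefficients that are exactly the cyclic traces scaled by $\beta$, which are the "classical cumulants" of the $\tr(Ws_i)$, and the moment–cumulant formula over $S_n$ gives the stated sum directly.)
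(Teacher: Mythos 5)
Your proof is correct, but note that the paper does not actually prove Proposition \ref{prop:1st} at all: it simply cites Proposition 1 of \cite{GLM2} (and Theorem 1 of \cite{LM1}), and the argument in those references is essentially the one you give --- differentiate the Laplace transform $\det(I_d-\theta\sigma)^{-\beta}$ at $\theta=\sum_i t_i s_i$ and read off the multilinear coefficient via the expansion $-\beta\tr\log(I_d-\theta\sigma)=\beta\sum_{r\ge 1}\tfrac1r\tr((\theta\sigma)^r)$; equivalently, the joint cumulants of $\tr(Ws_1),\dots,\tr(Ws_n)$ are the cyclic trace monomials $\beta R_c(\sigma;s_1,\dots,s_n)$, and the moment--cumulant formula (set partitions of $[n]$ with an attached cycle per block) is exactly a sum over $S_n$. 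So you have supplied a self-contained proof of what the paper leaves as a citation. One bookkeeping point in your last paragraph is garbled and should be fixed in a final write-up: the product $(r-1)!\cdot\tfrac1r\cdot r$ is not the relevant quantity, since the $(r-1)!$ distinct $r$-cycles on a block $S$ of size $r$ produce \emph{distinct} trace monomials and hence cannot be lumped into a single multiplier. The clean accounting is: the $r!$ multilinear orderings $(a_1,\dots,a_r)$ of $S$ inside $\tr((\theta\sigma)^r)$ fall into $(r-1)!$ cyclic classes of size $r$, each class yielding one monomial $R_c(\sigma;s_1,\dots,s_n)$ for one $r$-cycle $c$ on $S$; the factor $\tfrac1r$ from the logarithm cancels the class size $r$, so each cycle carries weight exactly $\beta$, and the $\tfrac{1}{k!}$ from $\exp$ cancels the $k!$ orderings of the (pairwise distinct) blocks among the $k$ factors of $F^k$. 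Two routine justifications should also be stated: differentiation under $\bE$ is legitimate because the moment generating function is finite on an open neighborhood of $\theta=0$ (for small $t_i$ one has $\sigma^{-1}-\theta\in\Omega$), and the $\log\det$ power series converges there since the spectral radius of $\theta\sigma$ is small.
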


\begin{proof}
See Proposition 1 in \cite{GLM2}. See also Theorem 1 in \cite{LM1}.
\end{proof}

Theorem \ref{thm:GM} is a consequence of Proposition \ref{prop:1st}
and Proposition \ref{prop:AlphaHf}.
For $1 \le a, b \le d$, denote by $E_{ab}=E_{ab}^{(d)}$ the matrix unit of size $d$,
whose $(i,j)$-entry is
$(E_{ab})_{ij} = \delta_{ai} \delta_{bj}$.
We apply Proposition \ref{prop:1st} with $s_j= (E_{k_{2j-1} k_{2j}} +E_{k_{2j} k_{2j-1}})/2$
$(1 \le j \le n)$.
Since $W$ is symmetric, we have
$\tr(W s_j)= (W_{k_{2j-1} k_{2j}}+
W_{k_{2j} k_{2j-1}})/2= W_{k_{2j-1} k_{2j}}$, and therefore it follows from 
Proposition \ref{prop:1st}  that
\begin{align*}
&\bE[W_{k_1 k_2} W_{k_3 k_4}\cdots W_{k_{2n-1} k_{2n}}] \\
=&
 2^{-n}\sum_{\pi \in S_n} \beta^{\nu(\pi)} R_\pi(\sigma;
E_{k_1 k_2}+E_{k_2 k_1}, \dots, E_{k_{2n-1} k_{2n}} + E_{k_{2n} k_{2n-1}}).
\end{align*}
From Proposition \ref{prop:AlphaHf},
in order to prove Theorem \ref{thm:GM},
it is sufficient to show
\begin{equation} \label{eq:R_P}
R_\pi(\sigma;
E_{k_1 k_2}+E_{k_2 k_1}, \dots, E_{k_{2n-1} k_{2n}} + E_{k_{2n} k_{2n-1}})
=  P_\pi \( (\sigma_{k_p k_q})_{p,q \in [2n]} \)
\end{equation}
for any permutation $\pi \in S_n$.

To show \eqref{eq:R_P}, let $A=(A_{pq})_{p,q \in [2n]}$ be a symmetric matrix
and let $c=(c_r \to c_1 \to c_2 \to \cdots \to c_r)$ be a cycle.
Equation \eqref{eq:R_P} follows from 
\begin{equation} \label{eq:R_P2}
\tr( A (E_{2c_1-1, 2c_1} + E_{2c_1, 2c_1-1}) 
\cdots A (E_{2c_r-1, 2c_r} + E_{2c_r-1, 2c_r})) = P_c (A),
\end{equation}
with $A=(\sigma_{k_p k_q})_{p,q \in [2n]}$.
Here the $E_{ab}=E_{ab}^{(2n)}$ are $2n \times 2n$ matrix units.
However we may show \eqref{eq:R_P2} as follows: 
\begin{align*}
& \tr( A (E_{2c_1-1, 2c_1} + E_{2c_1, 2c_1-1}) 
\cdots A (E_{2c_r-1, 2c_r} + E_{2c_r-1, 2c_r})) \\
=& \sum_{j_1,j_2,\dots,j_{2r}=1}^{2n} A_{j_{2r} j_1} (E_{2c_1-1, 2c_1} + E_{2c_1, 2c_1-1})_{j_1 j_2} A_{j_2 j_3}\cdots 
A_{j_{2r-2} j_{2r-1}} (E_{2c_r-1, 2c_r} + E_{2c_r, 2c_r-1})_{j_{2r-1} j_{2r}} \\
=& \sum_{j_1,\dots,j_{2r}} A_{j_{2r} j_1} A_{j_2 j_3} \cdots A_{j_{2r-2} j_{2r-1}}.
\end{align*}
Here the last sum is over
$(j_{2k-1},j_{2k}) \in \{(2c_k-1,2c_k), (2c_k,2c_k-1)\}$ \ 
$(k=1,2,\dots,r )$.
Hence we obtain
\eqref{eq:R_P2} and therefore \eqref{eq:R_P}.
It ends the proof of Theorem \ref{thm:GM}.

\section{Orthogonal Weingarten functions} \label{sec:Wg}

We review the theory of the Weingarten function for orthogonal groups;
see \cite{CM,Mat2} for details.
Claims in Subsections \ref{subsec:Hyper}--\ref{subsec:zonal} are also
seen in \cite[VII-2]{Mac}.

\subsection{Hyperoctahedral groups and perfect matchings} \label{subsec:Hyper}

Let $H_n$ be the subgroup in $S_{2n}$ generated by
transpositions $(2k-1 \to 2k \to 2k-1 )$ $(1 \le k \le n)$ and 
by double transpositions $(2i-1 \to 2j-1 \to 2i-1) \cdot 
(2i \to 2j \to 2i)$ \ $(1 \le i<j \le n)$.
The group $H_n$ is called the {\it hyperoctahedral group}.
Note that $|H_n|=2^n n!$.

We embed the set $\mcal{M}(2n)$ into $S_{2n}$ via the mapping
$$
\mcal{M}(2n) \ni \mf{m} \mapsto \begin{pmatrix}
1 & 2 & 3 & 4 & \cdots & 2n \\ 
\mf{m}(1) & \mf{m}(2) & \mf{m}(3) & \mf{m}(4) & \cdots & \mf{m}(2n) 
\end{pmatrix} \in S_{2n}
$$
where  $(\mf{m}(1), \dots, \mf{m}(2n))$ is the unique sequence  satisfying
\begin{align*}
&\mf{m}=\left\{\{\mf{m}(1),\mf{m}(2) \}, \dots, \{\mf{m}(2n-1), \mf{m}(2n)\} \right\}, \\
& \mf{m}(2k-1) < \mf{m}(2k)  \quad (1 \le k \le n), \qquad \text{and} \qquad
 1=\mf{m}(1) < \mf{m}(3) < \cdots <\mf{m}(2n-1).
\end{align*}
The $\mf{m} \in \mcal{M}(2n)$ are representatives of the cosets $g H_n$ of $H_n$ in $S_{2n}$:
\begin{equation} \label{eq:LeftDecom}
S_{2n} = \bigsqcup_{\mf{m} \in \mcal{M}(2n)} \mf{m} H_n.
\end{equation}

\subsection{Coset-types}

A {\it partition} $\lambda=(\lambda_1,\lambda_2,\dots)$ is a weakly decreasing sequence
of nonnegative integers such that $|\lambda|:=\sum_{i \ge 1} \lambda_i$ is finite.
If $|\lambda|=n$, we call $\lambda$ a {\it partition of $n$} and write $\lambda \vdash n$.
Define the length $\ell(\lambda)$ of $\lambda$ by the number of nonzero $\lambda_i$.

Given $g \in S_{2n}$, we attach a graph $G(g)$ with vertices
$1,2,\dots,2n $ and with the edge set
$$
\big\{ \{2k-1,2k\} \ | \ k \in [n] \big\} \sqcup 
\big\{ \{ g(2k-1), g(2k)\} \ | \ k \in [n] \big\}.
$$
Each connected component of $G(g)$ has even vertices.
Let $2\lambda_1,2\lambda_2,\dots, 2\lambda_l$ be numbers of vertices of components.
We may suppose $\lambda_1 \ge \lambda_2 \ge \cdots \ge \lambda_l$.
Then the sequence $\lambda=(\lambda_1,\lambda_2,\dots,\lambda_l)$ is a partition of $n$.
We call the $\lambda$ the {\it coset-type} of $g \in S_{2n}$.

For example, the coset-type of 
$\( \begin{smallmatrix} 1 & 2 & 3 & 4 & 5 & 6 & 7 & 8 \\ 7 & 1 & 6 & 3 & 2 & 8 & 4 & 5 \end{smallmatrix} \)$ in $S_{8}$ is $(2,2)$.

In general, given $g,g' \in S_{2n}$, their coset-types coincide if and only if
$H_n g H_n= H_n g' H_n$.
Hence we have the double coset decomposition of $H_n$ in $S_{2n}$:
\begin{equation} \label{eq:doubledecomposition}
S_{2n}= \bigsqcup_{\rho \vdash n} H_\rho, \qquad \text{where $
H_\rho= \{g \in S_{2n} \ | \ \text{the coset-type of $g$ is $\rho$}\} $}.
\end{equation}
Note $H_{(1^n)}=H_n$ and 
$|H_\rho|= (2^n n!)^2/ (2^{\ell(\rho)} z_\rho)$. 
Here
\begin{equation} \label{eq:z}
z_\rho= \prod_{r \ge 1} r^{m_r(\rho)} m_r(\rho)!
\end{equation}
with multiplicities $m_r(\rho)= | \{ i \ge 1 \ | \ \rho_i=r\}|$ of $r$ in $\rho$.

For $g \in S_{2n}$, denote by $\kappa(g)$ the number of connected components of $G(g)$.
Equivalently, $\kappa(g)$ is the length of the coset-type of $g$.
Under the embedding $\mcal{M}(2n) \subset S_{2n}$, we may define $G(\mf{m})$ and
$\kappa(\mf{m})$ for each $\mf{m} \in \mcal{M}(2n)$.
They are compatible with
their definitions in Subsection \ref{subsec:results}.

\subsection{Zonal spherical functions} \label{subsec:zonal}

For two functions $f_1,f_2$ on $S_{2n}$,
their convolution $f_1 * f_2$ is defined by
$$
(f_1* f_2)(g)= \sum_{g' \in S_{2n}} f_1(g (g')^{-1}) f_2(g') \qquad (g \in S_{2n}).
$$

Let $\mcal{H}_n$ be the set of all complex-valued $H_n$-biinvariant functions on $S_{2n}$:
$$
\mcal{H}_n=\{ f: S_{2n} \to \bC \ | \ f(\zeta g)= f(g \zeta)=f(g) \ (g \in S_{2n}, \ 
\zeta \in H_n)\}.
$$
It is known  that
this is a commutative algebra under convolution, with
unit $\mathbf{1}_{\mcal{H}_n}$ given by
\begin{equation} \label{eq:unitHn}
\mathbf{1}_{\mcal{H}_n}(g) = 
\begin{cases}
(2^n n!)^{-1} & \text{if $g \in H_n$,} \\
0 & \text{otherwise}.
\end{cases}
\end{equation}
Therefore $(S_{2n},H_n)$ is a {\it Gelfand pair} in the sense of \cite[VII.1]{Mac}.
The algebra $\mcal{H}_n$ is called 
the {\it Hecke algebra} associated with the Gelfand pair $(S_{2n},H_n)$.

For each $\lambda \vdash n$ we define
the {\it zonal spherical function} $\omega^\lambda$ by
$$
\omega^\lambda(g)= \frac{1}{2^n n!} \sum_{\zeta \in H_n} \chi^{2\lambda}(g \zeta)
\qquad
(g \in S_{2n}).
$$
Here $\chi^{2\lambda}$ is the irreducible character of $S_{2n}$ associated with
$2\lambda=(2\lambda_1,2\lambda_2,\dots)$.
The $\omega^\lambda$ \ $(\lambda \vdash n)$ form a basis of $\mcal{H}_n$
and have the property
\begin{equation} \label{eq:zonalorthogonal}
\omega^\lambda * \omega^\mu= \delta_{\lambda \mu}\frac{(2n)!}{f^{2\lambda}}
\omega^\lambda \qquad \text{for all $\lambda,\mu \vdash n$}.
\end{equation}
Here $f^{2\lambda}$ is the value of $\chi^{2\lambda}$ at the identity of $S_{2n}$,
or equivalently 
the dimension of the irreducible representation of character  $\chi^{2\lambda}$.
We denote by $\omega^\lambda_\rho$ the value of $\omega^\lambda$
at the double coset $H_\rho$. Note $\omega^\lambda_{(1^n)}=1$ for all $\lambda \vdash n$.

\subsection{Zonal polynomials} \label{subsec:zonal}

We now need the theory of symmetric functions.
Let $\Lambda$ be the algebra of symmetric functions 
in infinitely-many variables $x_1,x_2,\dots$ and
with coefficients in $\bQ$.
Let $\lambda=(\lambda_1,\lambda_2,\dots)$ be a partition of $n$. 
We denote by $p_\lambda$ 
the {\it power-sum symmetric function}:
$$
p_\lambda= \prod_{i=1}^{\ell(\lambda)} p_{\lambda_i} \qquad  \text{and} \qquad
p_k(x_1,x_2,\dots)= x_1^k + x_2^k+ \cdots.
$$
Let $Z_\lambda$ be the {\it zonal polynomial} (or zonal symmetric function):
\begin{equation} \label{eq:Zp}
Z_\lambda= 2^n n! \sum_{\rho \vdash n} 2^{-\ell(\rho)} z_\rho^{-1} \omega^\lambda_\rho p_\rho. 
\end{equation}
Here $z_\rho$ is the quantity defined in \eqref{eq:z}.
Alternatively, for $\rho \vdash n$, 
\begin{equation} \label{eq:pZ}
p_\rho= \frac{2^n n!}{(2n)!} \sum_{\lambda \vdash n} f^{2\lambda} \omega^\lambda_\rho Z_\lambda.
\end{equation}

Recall that $\Lambda$ is the algebra generated by $\{p_r \ | \ r \ge 1\}$ and 
that the $p_r$ are algebraically independent.
Let $z$ be a complex number and let $\phi_z: \Lambda \to \bC$ be the algebra homomorphism
defined by $\phi_z(p_r)= z$ for all $r \ge 1$. 
Then we have the {\it specializations}
\begin{equation} \label{eq:secialC}
\phi_z(p_\rho)= z^{\ell(\rho)} \qquad \text{and} \qquad \phi_z(Z_\lambda)= C_\lambda(z) :=
\prod_{(i,j) \in \lambda} (z+2j-i-1)
\end{equation}
where the product $\prod_{(i,j) \in \lambda}$ stands for
$\prod_{i=1}^{\ell(\lambda)} \prod_{j=1}^{\lambda_i}$, which is
over all boxes of the Young diagram of $\lambda$.
It follows by \eqref{eq:Zp} and \eqref{eq:pZ} that
\begin{equation} \label{eq:specialZp}
C_\lambda(z)=
2^n n! \sum_{\rho \vdash n} 2^{-\ell(\rho)} z_\rho^{-1} \omega^\lambda_\rho z^{\ell(\rho)}
\qquad \text{and} \qquad
z^{\ell(\rho)}= \frac{2^n n!}{(2n)!} \sum_{\lambda \vdash n} f^{2\lambda} \omega^\lambda_\rho
C_\lambda(z).
\end{equation}

\subsection{Weingarten functions} \label{subsec:Wg}

Let $z$ be a complex number such that $C_\lambda(z) \not=0$ for all $\lambda \vdash n$.
We define a function $\mr{Wg}^O(\cdot;z)$ in $\mcal{H}_n$ by
\begin{equation} \label{eq:DefWg}
\mr{Wg}^O(g;z)= \frac{1}{(2n-1)!!} \sum_{\lambda \vdash n}
\frac{f^{2\lambda}}{C_\lambda(z)} \omega^\lambda(g) \qquad (g \in S_{2n}).
\end{equation} 
We call it the {\it orthogonal Weingarten function}
(or {\it Weingarten function for orthogonal groups}).

The function $g \mapsto \mr{Wg}^O(g;z)$ is constant at each double coset $H_\rho$
$(\rho \vdash n)$. 
We denote by (the same symbol) $\mr{Wg}^O(\rho;z)$ its value at $H_\rho$.

\begin{example} \label{ex:Wgz}
\begin{align*}
\mr{Wg}^O((1);z)=& \frac{1}{z}. \\
\mr{Wg}^O((2);z)=& \frac{-1}{z(z+2)(z-1)}. \qquad 
\mr{Wg}^O((1^2);z)= \frac{z+1}{z(z+2)(z-1)}. 
\end{align*}
The list of $\mr{Wg}^O(\rho;z)$ for $|\rho| \le 6$ is seen in \cite{CM}.
\end{example}

Define the function $\mr{G}^O(\cdot ;z)$ in $\mcal{H}_{n}$ by
$$
\mr{G}^O(g;z)= z^{\kappa(g)} \qquad (g \in S_{2n}).
$$
The following lemma is a key in our proof of Theorem \ref{thm:InvGM}.

\begin{lem}[\cite{CM}] \label{lem:Gwg}
$$
\mr{G}^O( \cdot ;z) * \mr{Wg}^O( \cdot;z) = (2^n n!)^2 \mathbf{1}_{\mcal{H}_n}.
$$
Here $\mathbf{1}_{\mcal{H}_n}$ is defined in \eqref{eq:unitHn}.
\end{lem}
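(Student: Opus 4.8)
The plan is to diagonalize both convolution factors in the basis of zonal spherical functions $\{\omega^\lambda\}_{\lambda \vdash n}$ of $\mcal{H}_n$, where the convolution product is governed by the simple rule \eqref{eq:zonalorthogonal}. The function $\mr{Wg}^O(\cdot;z)$ is already presented in this basis by its definition \eqref{eq:DefWg}, so the first task is to expand $\mr{G}^O(\cdot;z)$ likewise. Since $\mr{G}^O(g;z) = z^{\kappa(g)} = z^{\ell(\rho)}$ whenever $g$ has coset-type $\rho$, the second identity in \eqref{eq:specialZp} gives at once
$$
\mr{G}^O(\cdot;z) = \frac{2^n n!}{(2n)!} \sum_{\lambda \vdash n} f^{2\lambda} C_\lambda(z) \, \omega^\lambda .
$$

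With both functions expanded, the second step is to form the convolution and apply \eqref{eq:zonalorthogonal}. Multiplying the two series and using $\omega^\lambda * \omega^\mu = \delta_{\lambda\mu} \frac{(2n)!}{f^{2\lambda}} \omega^\lambda$ collapses the double sum to a single sum over $\lambda$; the factors $C_\lambda(z)$ cancel (this cancellation is precisely why $\mr{Wg}^O$ carries $C_\lambda(z)^{-1}$), leaving
$$
\mr{G}^O(\cdot;z) * \mr{Wg}^O(\cdot;z) = \frac{2^n n!}{(2n)!} \cdot \frac{1}{(2n-1)!!} \sum_{\lambda \vdash n} f^{2\lambda} (2n)! \, \omega^\lambda = \frac{2^n n!}{(2n-1)!!} \sum_{\lambda \vdash n} f^{2\lambda} \omega^\lambda .
$$

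The third step is to recognize $\sum_{\lambda \vdash n} f^{2\lambda} \omega^\lambda$ as a multiple of the unit $\mathbf{1}_{\mcal{H}_n}$. Indeed, \eqref{eq:zonalorthogonal} shows that the elements $e_\lambda := \frac{f^{2\lambda}}{(2n)!} \omega^\lambda$ are orthogonal idempotents ($e_\lambda * e_\mu = \delta_{\lambda\mu} e_\lambda$), and since $\{\omega^\lambda\}_{\lambda \vdash n}$ is a basis of the commutative algebra $\mcal{H}_n$ their sum must be its identity; hence $\sum_\lambda f^{2\lambda} \omega^\lambda = (2n)! \, \mathbf{1}_{\mcal{H}_n}$. (Equivalently, one checks $\mathbf{1}_{\mcal{H}_n} * \omega^\mu = \omega^\mu$ term by term from \eqref{eq:zonalorthogonal}, with $\mathbf{1}_{\mcal{H}_n}$ as in \eqref{eq:unitHn}.) Substituting this and simplifying the scalar by $(2n)! = 2^n n! \,(2n-1)!!$ yields $\frac{2^n n!}{(2n-1)!!} (2n)! = (2^n n!)^2$, which is the claimed constant.

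I do not expect a genuine obstacle here: the conceptual content is entirely the observation that $\mr{G}^O$ and $\mr{Wg}^O$ are, up to explicit scalars, spectrally inverse to one another in the Hecke algebra. The only point demanding care will be the bookkeeping of the normalizing constants $f^{2\lambda}$, $(2n)!$, $(2n-1)!!$, and $2^n n!$, together with ensuring the $C_\lambda(z)$ cancel exactly so that the product becomes proportional to $\sum_\lambda f^{2\lambda}\omega^\lambda$.
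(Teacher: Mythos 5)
Your proof is correct and follows essentially the same route as the paper: expand $\mr{G}^O(\cdot;z)$ in the zonal spherical basis via the second formula in \eqref{eq:specialZp}, convolve with \eqref{eq:DefWg} using \eqref{eq:zonalorthogonal} so that the $C_\lambda(z)$ cancel, and identify $\sum_{\lambda \vdash n} f^{2\lambda}\omega^\lambda = (2n)!\,\mathbf{1}_{\mcal{H}_n}$, with the same scalar bookkeeping since $(2n)! = 2^n n!\,(2n-1)!!$. The only (harmless) divergence is in that last identification, where the paper argues by the limit $t \to +\infty$ of $t^{-n}\mr{G}^O(\cdot;t)$ using $\lim t^{-n}C_\lambda(t)=1$, while you invoke uniqueness of the unit of $\mcal{H}_n$ against the orthogonal idempotents $\frac{f^{2\lambda}}{(2n)!}\omega^\lambda$ — both are valid.
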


\begin{proof}
Recall that if $\rho$ is the coset-type of $g$, then $\kappa(g)=\ell(\rho)$.
From the second formula in \eqref{eq:specialZp}, we have
\begin{equation} \label{eq:ExpandG}
G^O(\cdot;z)= \frac{2^n n!}{(2n)!} \sum_{\lambda \vdash n} f^{2\lambda} C_\lambda(z) 
\omega^\lambda,
\end{equation}
so that
$$
\mr{G}^O( \cdot ;z) * \mr{Wg}^O( \cdot;z)
=  \frac{(2^n n!)^2}{(2n)!} \sum_{\lambda \vdash n} f^{2\lambda} \omega^\lambda
$$
by \eqref{eq:DefWg} and \eqref{eq:zonalorthogonal}.

On the other hand, since $\lim_{t \in \bR, \ t \to +\infty} t^{-n} C_\lambda(t)=1$, 
using the second formula in \eqref{eq:specialZp} again, we may see that
$$
\frac{2^n n!}{(2n)!} \sum_{\lambda \vdash n} f^{2\lambda} \omega^\lambda(g)
= \lim_{t \to +\infty} t^{-n} \frac{2^n n!}{(2n)!} \sum_{\lambda \vdash n} f^{2\lambda}
C_\lambda(t) \omega^\lambda(g) 
= \lim_{t \to +\infty} t^{-(n-\kappa(g))},
$$
which is equal to $1$ if $g \in H_n$, or to zero otherwise.
Hence we have
$$
\mathbf{1}_{\mcal{H}_n} = \frac{1}{(2n)!} \sum_{\lambda \vdash n} f^{2\lambda} \omega^\lambda.
$$
This finishes the proof.
\end{proof}

\subsection{Weingarten calculus for orthogonal groups}

The content in this subsection will not be used in the latter sections.
We here review how the Weingarten function $\mr{Wg}^O$ appears
in the theory of random orthogonal matrices.

Let $O(N)$ be the compact Lie group of $N \times N$ real orthogonal matrices.
The group $O(N)$ is equipped with the  {\it Haar probability measure} $\mathrm{d} O$ such that
$\mathrm{d}(U_1 O U_2) = \mathrm{d} O$ for fixed $U_1,U_2 \in O(N)$ and that
$\int_{O(N)} \mathrm{d} O =1$.

Let $O=(O_{ij})_{i,j \in [N]}$ be a Haar-distributed orthogonal matrix.
Consider a general moment
$$
\bE[O_{i_1 j_1} O_{i_2 j_2} \cdots O_{i_k j_k}] \qquad
(i_1,i_2,\dots,i_k,j_1,j_2,\dots,j_k \in [N]).
$$
From the biinvariant property for the Haar measure,
we can see immediately that
$\bE[O_{i_1 j_1} O_{i_2 j_2} \cdots O_{i_k j_k}]=0$ if $k$ is odd.

\begin{prop}[\cite{CM,CS}] \label{prop:CM}
Let $i_1,\dots,i_{2n}, j_1,\dots,j_{2n}$ be indices in $[N]$.
Assume that $N \ge n$ and let $O=(O_{ij})_{i,j \in [N]}$ be 
a Haar-distributed orthogonal matrix. Then we have
$$
\bE[O_{i_1 j_1} O_{i_2 j_2} \cdots O_{i_{2n} j_{2n}}]
= \sum_{\mf{m},\mf{n} \in \mcal{M}(2n)} \mr{Wg}^O(\mf{m}^{-1} \mf{n};N)
\( \prod_{\{p,q\} \in \mf{m}} \delta_{i_p,i_q} \)
\( \prod_{\{p,q\} \in \mf{n}} \delta_{j_p,j_q} \).
$$
Here each $\mf{m} \in \mcal{M}(2n)$ is regarded as a permutation in $S_{2n}$.
\end{prop}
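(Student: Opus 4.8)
The plan is to realize the moment as the matrix of an orthogonal projection and then combine classical invariant theory with Lemma \ref{lem:Gwg}. Set $V=\bR^N$ and let $O(N)$ act on $V^{\otimes 2n}$ through $O\mapsto O^{\otimes 2n}$. Then $P:=\int_{O(N)} O^{\otimes 2n}\,\mathrm{d}O$ is the orthogonal projection onto the invariant subspace $(V^{\otimes 2n})^{O(N)}$, and writing $\mbf{i}=(i_1,\dots,i_{2n})$, $e_{\mbf{i}}=e_{i_1}\otimes\cdots\otimes e_{i_{2n}}$, its standard-basis entry is $\langle e_{\mbf{i}}, O^{\otimes 2n} e_{\mbf{j}}\rangle=\prod_a O_{i_a j_a}$. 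Averaging over $O(N)$ gives $P_{\mbf{i},\mbf{j}}=\bE[O_{i_1 j_1}\cdots O_{i_{2n} j_{2n}}]$, so it suffices to compute the entries of $P$.

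By the first fundamental theorem of classical invariant theory for the orthogonal group, $(V^{\otimes 2n})^{O(N)}$ is spanned by the contraction tensors $v_{\mf{m}}=\sum_{\mbf{i}}\delta_{\mf{m}}(\mbf{i})\,e_{\mbf{i}}$ indexed by $\mf{m}\in\mcal{M}(2n)$, where $\delta_{\mf{m}}(\mbf{i})=\prod_{\{p,q\}\in\mf{m}}\delta_{i_p i_q}$. A direct count gives the Gram matrix $\langle v_{\mf{m}}, v_{\mf{n}}\rangle=N^{\kappa(\mf{m}^{-1}\mf{n})}=\mr{G}^O(\mf{m}^{-1}\mf{n};N)$, since an index tuple satisfying both sets of constraints is constant on each connected component of the superposition of the two matchings. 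The hypothesis $N\ge n$ ensures $C_\lambda(N)\neq 0$ for every $\lambda\vdash n$, whence the Gram matrix is nonsingular and the $v_{\mf{m}}$ form a basis. Because $\{v_{\mf{m}}\}$ is a basis, the projection is $P=\sum_{\mf{m},\mf{n}}(\mr{G}^{-1})_{\mf{m},\mf{n}}\,v_{\mf{m}}\,v_{\mf{n}}^{*}$, so that $P_{\mbf{i},\mbf{j}}=\sum_{\mf{m},\mf{n}}(\mr{G}^{-1})_{\mf{m},\mf{n}}\,\delta_{\mf{m}}(\mbf{i})\,\delta_{\mf{n}}(\mbf{j})$, and it remains to identify $(\mr{G}^{-1})_{\mf{m},\mf{n}}$ with $\mr{Wg}^O(\mf{m}^{-1}\mf{n};N)$.

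This identification is the \emph{crux}, and it is where Lemma \ref{lem:Gwg} enters. For any $H_n$-biinvariant functions $f_1,f_2$, splitting $S_{2n}=\bigsqcup_{\mf{l}}\mf{l} H_n$ and using biinvariance yields the reduction $(f_1*f_2)(\mf{m}^{-1}\mf{n})=|H_n|\sum_{\mf{l}\in\mcal{M}(2n)} f_1(\mf{m}^{-1}\mf{l})\,f_2(\mf{l}^{-1}\mf{n})$. Applying this with $f_1=\mr{G}^O$ and $f_2=\mr{Wg}^O$, invoking $\mr{G}^O*\mr{Wg}^O=(2^n n!)^2\,\mathbf{1}_{\mcal{H}_n}$ from Lemma \ref{lem:Gwg}, and using $\mathbf{1}_{\mcal{H}_n}(\mf{m}^{-1}\mf{n})=|H_n|^{-1}\delta_{\mf{m},\mf{n}}$ (valid since $\mf{m}^{-1}\mf{n}\in H_n$ exactly when $\mf{m}=\mf{n}$), I obtain $\sum_{\mf{l}}\mr{G}^O(\mf{m}^{-1}\mf{l})\,\mr{Wg}^O(\mf{l}^{-1}\mf{n})=\delta_{\mf{m},\mf{n}}$. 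Thus $[\mr{Wg}^O(\mf{m}^{-1}\mf{n};N)]$ is the inverse of the Gram matrix, giving $(\mr{G}^{-1})_{\mf{m},\mf{n}}=\mr{Wg}^O(\mf{m}^{-1}\mf{n};N)$ and hence the asserted formula.

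The main obstacle is exactly this final step: one must verify the combinatorial identity $\langle v_{\mf{m}}, v_{\mf{n}}\rangle=N^{\kappa(\mf{m}^{-1}\mf{n})}$ and carry the normalization constant $|H_n|=2^n n!$ correctly through the passage from a convolution over all of $S_{2n}$ to a sum over the coset representatives $\mcal{M}(2n)$; the invertibility of the Gram matrix for $N\ge n$ (equivalently $C_\lambda(N)\neq 0$) is what makes both the basis property and the matrix inversion legitimate.
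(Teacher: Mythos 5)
Your proof is correct: the paper itself gives no proof of Proposition \ref{prop:CM} (it defers to \cite{CM,CS}), and your argument --- realizing $\bE[O^{\otimes 2n}]$ as the orthogonal projection onto the invariant subspace $(V^{\otimes 2n})^{O(N)}$, spanning the invariants by the pairing tensors $v_{\mf{m}}$ via the first fundamental theorem, computing the Gram matrix $N^{\kappa(\mf{m}^{-1}\mf{n})}$, and inverting it through Lemma \ref{lem:Gwg} --- is precisely the standard route taken in those references. All details check out: the coset reduction of the convolution carries the factor $|H_n|=2^n n!$ correctly so that $\sum_{\mf{l}} \mr{G}^O(\mf{m}^{-1}\mf{l};N)\,\mr{Wg}^O(\mf{l}^{-1}\mf{n};N)=\delta_{\mf{m},\mf{n}}$, and although you invoke nonsingularity of the Gram matrix before constructing its inverse, this is harmless since the explicit inverse furnished by Lemma \ref{lem:Gwg} (available because $C_\lambda(N)\neq 0$ for $N\ge n$) itself certifies both the nonsingularity and the basis property.
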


In particular, using Example \ref{ex:Wgz}, we have
$$
\bE[O_{1, j_1} O_{1,j_2} O_{2,j_3} O_{2,j_4}] =
\frac{1}{N(N+2)(N-1)} ((N+1) \delta_{j_1 j_2} \delta_{j_3 j_4} -
\delta_{j_1 j_3} \delta_{j_2 j_4} - \delta_{j_1 j_4} \delta_{j_2 j_3})
$$
for $N \ge 2$ and $j_1,j_2,j_3,j_4 \in [N]$.

\begin{remark}
Proposition \ref{prop:CM} was first proved in \cite{CS} with a function 
$\mr{Wg}^O$, which was implicitly defined via the equation of 
Lemma \ref{lem:Gwg}.
The explicit expression \eqref{eq:DefWg} was first given in \cite{CM}.
Zinn--Justin \cite{ZJ} (see also \cite{Mat2}) gave
another expression, involving Jucys--Murphy elements. 
\end{remark}

\begin{remark}
If $\ell(\lambda)>N$ then $C_\lambda(N)=0$, and therefore 
the definition \eqref{eq:DefWg} does not make sense unless unless $N \ge n$. 
For $z=N \in \{1,2,\dots,n-1\}$
we extend the definition of the Weingarten function by
$$
\mr{Wg}^O(g;N)= 
\frac{1}{(2n-1)!!} \sum_{
\begin{subarray}{c} \lambda \vdash n \\ \ell(\lambda) \le N \end{subarray}}
\frac{f^{2\lambda}}{C_\lambda(N)} \omega^\lambda(g) \qquad (g \in S_{2n}).
$$ 
Then $\mr{Wg}^O(g;N)$ {\it does} make sense for all $g \in S_{2n}$, and 
Proposition \ref{prop:CM} holds true without any condition for $N$; see \cite{CM} for details.
\end{remark}

\section{Proof of Theorem \ref{thm:InvGM}} \label{sec:ProofThm2}

Let $d, \beta, \sigma$ be as in Introduction.
We also use symbols defined in Section \ref{sec:Wg}.
Our starting point for the proof of Theorem \ref{thm:InvGM} is 
the following lemma.

\begin{lem}  \label{lem:InvGM0}
Let $W \sim W_d(\beta,\sigma;\bR)$ and let $s_1,\dots,s_n \in \Sym(d)$.
Put $\gamma=\beta-\frac{d+1}{2}$ and suppose $\gamma>0$.
Then
$$
\tr(\sigma^{-1} s_1) \tr (\sigma^{-1} s_2) \cdots \tr (\sigma^{-1} s_n) = 
(-1)^n \sum_{\pi \in S_n} (-\gamma)^{\nu(\pi)}
\bE[ R_{\pi}(W^{-1};s_1,\dots,s_n)],
$$
where $R_\pi(\cdot;\cdots)$ is defined in Section \ref{sec:ProofThm1}.
\end{lem}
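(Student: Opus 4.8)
The plan is to prove the identity by integration by parts against the explicit Wishart density, exploiting the hypothesis $\gamma=\beta-\frac{d+1}{2}>0$, which guarantees that $(\det w)^{\gamma}e^{-\tr(\sigma^{-1}w)}$ vanishes on the boundary $\partial\Omega$ so that all boundary terms drop. The engine is the elementary cofactor identity $(w^{-1})_{ij}(\det w)^{\gamma}=\frac{1}{\gamma}\,\partial_{ij}(\det w)^{\gamma}$, which trades a factor $W^{-1}$ inside an expectation for a derivative. Writing $\theta=\sigma^{-1}$ and integrating by parts, I obtain a Stein-type identity of the form
\[
\gamma\,\bE[(W^{-1})_{ij}\,F(W)] = \theta_{ij}\,\bE[F(W)] - \bE[\mathcal{D}_{ij}F(W)],
\]
where $\mathcal{D}_{ij}$ is the symmetric matrix derivative, which acts on the remaining $W^{-1}$ factors through $\mathcal{D}(W^{-1})=-W^{-1}(\cdot)W^{-1}$. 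Taking $F\equiv 1$ gives the base case $\gamma\,\bE[W^{-1}]=\sigma^{-1}$, i.e.\ $\gamma\,\bE[\tr(W^{-1}s)]=\tr(\sigma^{-1}s)$, which is the lemma for $n=1$.

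For the inductive step I would set $F_n=\sum_{\pi\in S_n}(-\gamma)^{\nu(\pi)}\bE[R_\pi(W^{-1};s_1,\dots,s_n)]$ and prove the recursion $F_n=-\tr(\sigma^{-1}s_n)\,F_{n-1}$, from which $F_n=(-1)^n\prod_{i}\tr(\sigma^{-1}s_i)$, and hence the lemma, follows at once. To produce the recursion I apply the Stein identity to the single factor $W^{-1}$ sitting immediately to the left of $s_n$ in each $R_\pi$, and split $S_n$ according to the cycle containing $n$. When $n$ is a fixed point of $\pi$ (so $R_\pi=\tr(W^{-1}s_n)\,R_{\pi'}$ with $\pi'\in S_{n-1}$), the $\theta$-term yields exactly $\tr(\sigma^{-1}s_n)\,\bE[R_{\pi'}]$, which assembles into $-\tr(\sigma^{-1}s_n)F_{n-1}$ once the weight $(-\gamma)^{\nu(\pi')+1}$ is inserted. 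The derivative term, via $\mathcal{D}(W^{-1})=-W^{-1}(\cdot)W^{-1}$ contracted against $s_n$, reinserts $s_n$ together with a fresh $W^{-1}$ immediately before some index $i\in[n-1]$; this is precisely the operation of inserting the letter $n$ into the cycle of $\pi'$ just before $i$, and as $(\pi',i)$ ranges over its values these enumerate exactly the permutations of $S_n$ for which $n$ is not a fixed point, with $\nu$ unchanged.

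The key point is a cancellation: these derivative contributions reproduce, with the opposite sign, exactly the non-fixed-point part of $F_n$, so that after summing over all $\pi$ only $-\tr(\sigma^{-1}s_n)F_{n-1}$ survives. I expect the main obstacle to be this bookkeeping, namely verifying that the cofactor rule inserts $n$ before each index exactly once (there being $n-1$ such factors of $W^{-1}$), that the weights $(-\gamma)^{\nu(\pi)}$ match under the bijection $\pi\leftrightarrow(\pi',i)$ because $\nu$ is preserved, and that the minus sign from $\mathcal{D}(W^{-1})$ aligns with the factor $(-\gamma)^{\nu(\pi')+1}$ of the fixed-point term. A secondary technical matter is the careful handling of the symmetric-matrix derivatives (the factors of $2$ on off-diagonal coordinates) and the vanishing of boundary terms; both are harmless here because every contraction is against the symmetric matrices $s_i$ and $\sigma^{-1}$, so the symmetrization is automatic and the stray factors of $2$ cancel, exactly as one checks directly in the computation of the $n=1$ case.
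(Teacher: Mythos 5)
Your proposal is correct, and it is essentially the argument the paper itself points to: the paper omits the proof, citing \cite[Theorem 3]{GLM1}, whose method is exactly your integration by parts (Stokes' formula) against the density $(\det w)^{\gamma}e^{-\tr(\sigma^{-1}w)}$, with $\gamma>0$ guaranteeing the vanishing of boundary terms. Your Stein identity, the induction via the recursion $F_n=-\tr(\sigma^{-1}s_n)F_{n-1}$, and the cycle-insertion bijection $(\pi',i)\leftrightarrow\pi$ with $\nu$ preserved all check out, so you have in effect supplied the details the paper leaves to the reference.
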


\begin{proof}
We can obtain the proof in the same way to  \cite[Theorem 3]{GLM1}.
Therefore we omit it here.
(The assumption $\gamma= \beta -\frac{d+1}{2}>0$ implies that  
the real Wishart distribution $\mf{W}_{d,\beta,\sigma}$ has 
the density $f(w;d,\beta,\sigma)$ given by \eqref{eq:Density},
and that $f(w;d,\beta,\sigma)$ vanishes on the boundary of $\Omega$.
Therefore, we can apply Stokes' formula for $f$; see pages 298--299 in \cite{GLM1}.) 
\end{proof}

\begin{lem} \label{lem:InvGM1}
Let $W$ and $\gamma$ be as in Lemma \ref{lem:InvGM0}.
Given indices $k_1,k_2,\dots,k_{2n}$ 
from
$\{1,\dots,d\}$, we have
\begin{equation} \label{eq:InvGM1}
\sigma^{k_1 k_2} \sigma^{k_3 k_4}\cdots 
\sigma^{k_{2n-1} k_{2n}}=
(-1)^n 2^{-n}\sum_{\mf{m} \in \mcal{M}(2n)}
(-2\gamma)^{\kappa(\mf{m})} \bE
\left[
\prod_{ \{p,q\} \in \mf{m} }
W^{k_p k_q}
\right]. 
\end{equation}
\end{lem}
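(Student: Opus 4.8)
The goal is to deduce Lemma~\ref{lem:InvGM1} from Lemma~\ref{lem:InvGM0}. Lemma~\ref{lem:InvGM0} is stated for arbitrary symmetric matrices $s_1,\dots,s_n \in \Sym(d)$, so the plan is to specialize it to the matrix units that extract the desired entries of $W^{-1}$, exactly as was done in Section~\ref{sec:ProofThm1} to derive Theorem~\ref{thm:GM} from Proposition~\ref{prop:1st}. Concretely, I would set $s_j = (E_{k_{2j-1}k_{2j}} + E_{k_{2j}k_{2j-1}})/2$ for $1 \le j \le n$, where the $E_{ab}$ are $d \times d$ matrix units. Since $\sigma^{-1}$ is symmetric, $\tr(\sigma^{-1} s_j) = \sigma^{k_{2j-1}k_{2j}}$, so the left-hand side of Lemma~\ref{lem:InvGM0} becomes precisely $\sigma^{k_1 k_2}\sigma^{k_3 k_4}\cdots\sigma^{k_{2n-1}k_{2n}}$.

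The substance of the argument lies in the right-hand side. After the specialization, the summand is $(-\gamma)^{\nu(\pi)}\, \bE[R_\pi(W^{-1}; s_1,\dots,s_n)]$, and I would factor out the $2^{-n}$ coming from the halving in the $s_j$. The key computation is to identify $R_\pi(W^{-1}; E_{k_1 k_2}+E_{k_2 k_1}, \dots)$ with the quantity $P_\pi\big((W^{k_p k_q})_{p,q\in[2n]}\big)$. This is the direct analogue of equation~\eqref{eq:R_P}, and it follows from the same cycle-by-cycle trace identity~\eqref{eq:R_P2}: with $A = (W^{k_p k_q})_{p,q\in[2n]}$ playing the role that $(\sigma_{k_p k_q})$ played before, one has
\begin{equation*}
R_\pi(W^{-1}; E_{k_1 k_2}+E_{k_2 k_1},\dots) = P_\pi\big((W^{k_p k_q})_{p,q\in[2n]}\big).
\end{equation*}
Because this identity is purely algebraic in the entries of the matrix being traced and does not depend on what that matrix is, it transfers verbatim from the $\sigma$-case to the $W^{-1}$-case.

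Having rewritten the sum over $S_n$ in terms of $P_\pi$, I would invoke Proposition~\ref{prop:AlphaHf} with $\alpha = -2\gamma$ to collapse the sum over permutations into a sum over perfect matchings. Indeed, $\sum_{\pi \in S_n} (\alpha/2)^{\nu(\pi)} P_\pi(A) = \hf_\alpha(A)$, and taking $\alpha = -2\gamma$ gives $(\alpha/2)^{\nu(\pi)} = (-\gamma)^{\nu(\pi)}$, matching the coefficient in Lemma~\ref{lem:InvGM0} exactly. Since $\bE$ is linear, I may pull it outside and apply the proposition to the (random) matrix $A = (W^{k_p k_q})_{p,q \in [2n]}$, obtaining $\bE[\hf_{-2\gamma}(A)] = \sum_{\mf{m}\in\mcal{M}(2n)} (-2\gamma)^{\kappa(\mf{m})}\,\bE\big[\prod_{\{p,q\}\in\mf{m}} W^{k_p k_q}\big]$. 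Combining this with the overall prefactor $(-1)^n 2^{-n}$ yields~\eqref{eq:InvGM1}.

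I do not anticipate a serious obstacle, since every ingredient is already in place: Lemma~\ref{lem:InvGM0} supplies the permutation expansion, the trace identity~\eqref{eq:R_P2} is matrix-agnostic, and Proposition~\ref{prop:AlphaHf} performs the conversion to hafnians. The only point requiring mild care is bookkeeping of the two independent signs and the factors of $2$: the $(-1)^n$ in front, the $2^{-n}$ from the symmetrized matrix units, and the choice $\alpha = -2\gamma$ must all be tracked so that the coefficient $(-\gamma)^{\nu(\pi)}$ in Lemma~\ref{lem:InvGM0} aligns with $(\alpha/2)^{\nu(\pi)}$ in Proposition~\ref{prop:AlphaHf}. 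This is the step where an erroneous sign or power of $2$ would most easily creep in, so I would verify it against the degree-two case already displayed after Theorem~\ref{thm:InvGM}.
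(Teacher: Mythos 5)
Your proposal is correct and follows essentially the same route as the paper's own proof: specialize Lemma \ref{lem:InvGM0} to $s_j=(E_{k_{2j-1}k_{2j}}+E_{k_{2j}k_{2j-1}})/2$, use the matrix-agnostic identity \eqref{eq:R_P} to rewrite $R_\pi(W^{-1};\cdot)$ as $P_\pi\big((W^{k_pk_q})_{p,q\in[2n]}\big)$, and apply Proposition \ref{prop:AlphaHf} with $\alpha=-2\gamma$ so that $(\alpha/2)^{\nu(\pi)}=(-\gamma)^{\nu(\pi)}$, giving $(-1)^n2^{-n}\,\bE\big[\hf_{-2\gamma}\big((W^{k_pk_q})_{p,q\in[2n]}\big)\big]$. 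Your bookkeeping of the signs and powers of $2$ is exactly right, so nothing further is needed.
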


\begin{proof}
By using Lemma \ref{lem:InvGM0},
one can prove it in 
the same way to the proof of Theorem \ref{thm:GM}.
Indeed, 
applying Lemma \ref{lem:InvGM0} with
$s_j=(E_{k_{2j-1},k_{2j}}+E_{k_{2j},k_{2j-1}})/2$ \ $(1 \le j \le n)$,
and using \eqref{eq:R_P} and Proposition \ref{prop:AlphaHf},
we see that
\begin{align*}
& \sigma^{k_1 k_2} \sigma^{k_3 k_4} \cdots \sigma^{k_{2n-1} k_{2n}} \\
=& (-1)^n 2^{-n} \sum_{ \pi \in S_n} (-\gamma)^{\nu(\pi)} 
\bE[R_{\pi}(W^{-1}; E_{k_1 k_2}+E_{k_2 k_1}, \dots, E_{k_{2n-1}k_{2n}} +E_{k_{2n}k_{2n-1}})] \\
=& (-1)^n 2^{-n} \sum_{ \pi \in S_n} (-\gamma)^{\nu(\pi)}
\bE \left[P_\pi \( (W^{k_p k_q})_{p,q \in [2n]} \) \right]\\
=& (-1)^n 2^{-n} \bE\left[ \hf_{-2\gamma} (W^{k_p k_q})_{p,q \in [2n]} \right].
\end{align*} 
\end{proof}

Suppose $\gamma >n-1$. Then $\Wg^O(g;-2\gamma)$ ($g \in S_{2n}$) can be defined (see
Subsection \ref{subsec:Wg}).
Set
\begin{equation} \label{eq:tWg}
\tWg(g;\gamma)= (-1)^n 2^n \Wg^O(g;-2\gamma) = \frac{2^n n!}{(2n)!} 
(-1)^n 2^n \sum_{\lambda \vdash n} \frac{f^{2\lambda}}{C_\lambda(-2\gamma)} \omega^\lambda(g)
\qquad (g \in S_{2n}).
\end{equation}

We finally prove Theorem \ref{thm:InvGM}.
Recall that the functions $g \mapsto \kappa(g)$ and 
$g \mapsto \Wg(g;z)$ are $H_n$-biinvariant.
We can rewrite \eqref{eq:InvGM1} in the form
$$
\sigma^{k_1 k_2} \sigma^{k_3 k_4}\cdots 
\sigma^{k_{2n-1} k_{2n}}=
(-1)^n 2^{-n} (2^n n!)^{-1}\sum_{g \in S_{2n}}
(-2\gamma)^{\kappa(g)} \bE
\left[
W^{k_{g(1)} k_{g(2)}} \cdots W^{k_{g(2n-1)} k_{g(2n)}}
\right]
$$
by the coset decomposition \eqref{eq:LeftDecom}.
Therefore, the right hand side of \eqref{eq:InvGM2} is equal to
\begin{align*}
&(-1)^n 2^{n} (2^n n!)^{-1} \sum_{g' \in S_{2n}}
\mr{Wg}^O(g'; -2\gamma)
\sigma^{k_{g'(1)} k_{g'(2)}} \cdots \sigma^{k_{g'(2n-1)} k_{g'(2n)}} \\
=& 
(2^n n!)^{-2} \sum_{g, g' \in S_{2n}}
(-2\gamma)^{\kappa(g)}\mr{Wg}^O(g'; -2\gamma) 
\bE
\left[
W^{k_{g'g(1)} k_{g'g(2)}} \cdots W^{k_{g'g(2n-1)} k_{g'g(2n)}}
\right] \\
=& (2^n n!)^{-2} \sum_{g, g'' \in S_{2n}}
(-2\gamma)^{\kappa(g)}\mr{Wg}^O(g'' g^{-1} ; -2\gamma) 
\bE
\left[
W^{k_{g''(1)} k_{g''(2)}} \cdots W^{k_{g''(2n-1)} k_{g''(2n)}}
\right]
\end{align*}
by letting $g''=g'g$.
Since Lemma \ref{lem:Gwg} implies
$$
\sum_{g \in S_{2n}}
z^{\kappa(g)}\mr{Wg}^O(g'' g^{-1} ; z) =
\begin{cases}
2^n n! & \text{if $g'' \in H_n$,} \\
0 & \text{otherwise},
\end{cases}
$$
the last equation equals
$$
(2^n n!)^{-1} \sum_{g'' \in H_n}
\bE
\left[
W^{k_{g''(1)} k_{g''(2)}} \cdots W^{k_{g''(2n-1)} k_{g''(2n)}}
\right]
= \bE[W^{k_1 k_2} W^{k_3 k_4}\cdots 
W^{k_{2n-1} k_{2n}}].
$$
Hence we have proved Theorem \ref{thm:InvGM}.

\begin{remark} \label{remark:gamma}
Theorem \ref{thm:InvGM} holds true for 
any positive real number $\gamma$ such that
$C_\lambda(-2\gamma) \not=0$ for all $\lambda \vdash n$.
\end{remark}

\begin{remark}
The complex-Wishart version of Theorem \ref{thm:InvGM} is obtained 
by Graczyk et al. \cite{GLM1}.
They employ a class function on $S_n$ defined by
$$
\mr{Wg}^U(\pi; -q)= \frac{1}{n!} \sum_{\lambda \vdash n}
\frac{f^\lambda}{\prod_{(i,j)\in \lambda} (-q+j-i)} \chi^\lambda(\pi)
\qquad (\pi \in S_n),
$$
where $q >n-1$ is a parameter in \cite{GLM1}, corresponding to our $\gamma$. 
The function $\mr{Wg}^U(\pi; N)$ coincides with  the Weingarten fucntion
for the unitary group $U(N)$, studied in \cite{C} (see also \cite{MN}).
\end{remark}

\section{Applications} \label{sec:Appl}

In this section, we give applications of Theorem \ref{thm:GM} and
Theorem \ref{thm:InvGM}.

\subsection{Mixed moments of traces}

Recall the symbol $R_\pi(x;m_1,\dots,m_n)$ defined in Section \ref{sec:ProofThm1},
where $x$ is a $d \times d$ symmetric matrix, 
$m_1,\dots,m_n$ are $d \times d $ complex matrices, and $\pi \in S_n$.
For example,
\begin{align*}
R_{(1 \to 3 \to 2 \to 4 \to 1)}(x;m_1,m_2,m_3,m_4)=&
\tr ( x m_1 x m_3 x m_2 x m_4), \\
R_{(1 \to 4 \to 5 \to 1)(2 \to 7 \to 2)(6 \to 6)}
(x; m_1,m_2,\dots,m_7) =& \tr (x m_1 x m_4 x m_5) \tr(x m_2 x m_7 ) \tr (x m_6). 
\end{align*}
Thus $R_\pi(x;m_1,\dots,m_n)$ is a product of traces of the form
$\tr( x  m_{i_1} x m_{i_2} \cdots  x m_{i_k})$.
Our purpose in this section is to compute moments of the forms
$$
\bE[ R_\pi(W;m_1,\dots,m_n)] \qquad \text{and} \qquad 
\bE[ R_\pi(W^{-1};m_1,\dots,m_n)]
$$
where $W \sim W_d(\beta,\sigma;\bR)$ as usual.

First we observe a simple example.

\begin{example} \label{ex:mixMoment}
We compute $\bE[\tr(W m_1 W m_2)]$.
Expanding the trace, we have
$$
\bE[\tr(W m_1 W m_2)] =\sum_{k_1,k_2,k_3,k_4} (m_1)_{k_2 k_3} (m_2)_{k_4 k_1}  
\bE [W_{k_1 k_2} W_{k_3 k_4}].
$$
From Theorem \ref{thm:GM} or  \eqref{eq:2degreeW}, it is equal to 
\begin{align*}
&\sum_{k_1,k_2,k_3,k_4} (m_1)_{k_2 k_3} (m_2)_{k_4 k_1} \(
\beta^2 \sigma_{k_1 k_2} \sigma_{k_3 k_4} + \frac{\beta}{2} 
\sigma_{k_1 k_3} \sigma_{k_2 k_4} + \frac{\beta}{2}
\sigma_{k_1 k_4} \sigma_{k_2 k_3} \) \\
=& \beta^2 \tr (\sigma m_1 \sigma m_2) + \frac{\beta}{2} \tr (\sigma m_1^t \sigma m_2)
+ \frac{\beta}{2} \tr(\sigma m_1) \tr (\sigma m_2), 
\end{align*}
where $m^t$ is the transpose of $m$. In other words,
$$
\bE[R_{(1 \to 2 \to 1)}(W;m_1,m_2)] = \beta^2 R_{(1 \to 2 \to 1)}(\sigma;m_1,m_2)
+\frac{\beta}{2} R_{(1 \to 2 \to 1)}(\sigma;m_1^t,m_2) + \frac{\beta}{2}
R_{(1 \to 1)(2 \to 2)} (\sigma;m_1,m_2).
$$
This example indicates that we should deal with not only $m_1,\dots,m_n$
but also with their transposes $m_1^t,\dots,m^t_n$.
\end{example}

Given a matrix $m=(m_{ij})$ and a signature $\epsilon \in \{-1,+1 \}$, we put
$$
m^{\epsilon}= \begin{cases}
m & \text{if $\epsilon=+1$,} \\
m^t & \text{if $\epsilon=-1$}.
\end{cases}
$$

Let $m_1,\dots,m_n$ be $d \times d$ complex matrices and 
let $x=(x_{i,j})$ be a $d \times d$ real symmetric matrix.
Given a permutation $g \in S_{2n}$, we define $T_g(x;m_1,\dots,m_n)$ by
$$
T_g(x;m_1,\dots,m_n) 
=\sum_{j_1,\dots,j_{2n} =1}^d
(m_1)_{j_1, j_2} (m_2)_{j_3, j_4} \cdots (m_n)_{j_{2n-1}, j_{2n}}
x_{j_{g(1)}, j_{g(2)}} x_{j_{g(3)}, j_{g(4)}} \cdots x_{j_{g(2n-1)},j_{g(2n)}}.
$$
In our situation, the symbol $T_g$ is more useful than $R_\pi$.

Given $\pi \in S_n$, we denote by $\tilde{\pi}$ the permutation in $S_{2n}$ given by
$\tilde{\pi}(2j-1)=2\pi(j)-1$ and $\tilde{\pi}(2j)=2 j$ for $j=1,2,\dots,n$.
Denote by $\zeta_i$ the transposition $(2i -1 \to 2i \to 2i-1)$.

\begin{lem} \label{lem:RT}
For $\pi \in S_n$ and $\epsilon_1,\dots, \epsilon_n \in \{\pm 1\}$ we have
$$
R_{\pi}(x;m_1^{\epsilon_1},\dots, m_n^{\epsilon_n}) = T_g(x;m_1,\dots, m_n)
\qquad \text{with $g= \( \prod_{ i: \epsilon_i =-1 } \zeta_i \) \cdot \tilde{\pi}$}.
$$
\end{lem}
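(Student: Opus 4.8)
The plan is to unwind both sides of the claimed identity into explicit sums over index tuples and match them term by term. Recall that $R_\pi(x;m_1^{\epsilon_1},\dots,m_n^{\epsilon_n})$ is a product over the cycles of $\pi$ of factors of the form $\tr(x\,m_{c_1}^{\epsilon_{c_1}} x\,m_{c_2}^{\epsilon_{c_2}}\cdots x\,m_{c_r}^{\epsilon_{c_r}})$. First I would expand each such trace as a sum over internal summation indices. The crucial bookkeeping observation is that the transpose operation $m^{\epsilon}$ with $\epsilon=-1$ simply swaps the roles of the two matrix indices of $m$: writing $(m^{\epsilon_i})_{a,b}=(m_i)_{a,b}$ when $\epsilon_i=+1$ and $(m^{\epsilon_i})_{a,b}=(m_i)_{b,a}$ when $\epsilon_i=-1$, one sees that applying the transpose to $m_i$ is equivalent to precomposing the index assignment with the transposition $\zeta_i=(2i-1\to 2i\to 2i-1)$ acting on the pair of slots $\{2i-1,2i\}$ associated with $m_i$.

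The key step is then to identify the permutation $g\in S_{2n}$ controlling how the $x$-factors are wired to the $m$-indices in $T_g$. I would set up the correspondence so that each matrix $m_i$ occupies the index slots $2i-1$ and $2i$, and each factor of $x$ links two such slots according to $g$. For a single cycle $c=(c_r\to c_1\to\cdots\to c_r)$ of $\pi$ with all $\epsilon=+1$, tracing through $\tr(x\,m_{c_1}x\,m_{c_2}\cdots x\,m_{c_r})$ shows that the $x$-factors connect the outgoing slot of $m_{c_k}$ to the incoming slot of $m_{c_{k+1}}$, which is exactly the wiring encoded by $\tilde\pi$ (where $\tilde\pi(2j-1)=2\pi(j)-1$ and $\tilde\pi(2j)=2j$). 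Introducing a transpose on $m_i$ then flips its two slots, which is precisely the effect of left-multiplying $\tilde\pi$ by $\prod_{i:\epsilon_i=-1}\zeta_i$. I would verify this first for a single cycle and then extend multiplicatively across the cycle decomposition of $\pi$, using $R_\pi=\prod_{c\in C(\pi)}R_c$ and the analogous factorization of $T_g$ over the independent blocks.

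I expect the main obstacle to be the careful reconciliation of index conventions: the definition of $T_g$ pairs the $x$-indices as $x_{j_{g(2k-1)},j_{g(2k)}}$, so I must check that the convention for which slot of $m_i$ is the ``row'' versus ``column'' index matches the direction in which the cyclic product $\tr(x\,m_{c_1}x\,m_{c_2}\cdots)$ traverses the indices. The transpositions $\zeta_i$ and the factors $\tilde\pi$ commute in the relevant sense because $\zeta_i$ acts only within the block $\{2i-1,2i\}$ while $\tilde\pi$ permutes the odd slots among themselves and fixes each even slot, so the product $g=\bigl(\prod_{i:\epsilon_i=-1}\zeta_i\bigr)\tilde\pi$ is well defined and its action on the slots is unambiguous. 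Once the single-cycle case is settled and the conventions are pinned down, the general case follows by taking products over cycles, and the identity $R_\pi(x;m_1^{\epsilon_1},\dots,m_n^{\epsilon_n})=T_g(x;m_1,\dots,m_n)$ is immediate.
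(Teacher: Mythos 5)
Your proposal is correct and takes essentially the same route as the paper's proof: you establish the all-$\epsilon_i=+1$ case by expanding the trace cycle by cycle and identifying the $x$-wiring with $\tilde\pi$ (the paper's \eqref{eq:RT1}), and you absorb each transpose $m_i^t$ by precomposing the index assignment with $\zeta_i$, i.e., by left-multiplying the wiring permutation by $\zeta_i$, which is exactly the paper's change of variables $j'_k=j_{\zeta_i(k)}$ in \eqref{eq:RT2}. Your remarks on the disjoint supports of the $\zeta_i$ and the block structure preserved by $g$ correctly justify the iteration and the factorization of $T_g$ over cycles, so nothing essential is missing.
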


\begin{proof}
First we will show
\begin{equation} \label{eq:RT1}
R_{\pi}(x;m_1,\dots, m_n) = T_{\tilde{\pi}}(x;m_1,\dots, m_n).
\end{equation}
Take a cycle $c=(c_1 \to c_2 \to \cdots \to c_r \to c_1)$  in $\pi$.
Then we see that
\begin{align*}
& \sum_{j_{2c_1-1},j_{2c_1},\dots, j_{2c_r-1},j_{2 c_r} } \prod_{k=1}^r
(m_{c_k})_{j_{2c_k-1}, j_{2c_k}} x_{j_{\tilde{\pi}(2c_k-1)},j_{\tilde{\pi}(2c_k)}} \\
=& \sum_{j_{2c_1-1},j_{2c_1},\dots, j_{2c_r-1},j_{2 c_r} }\prod_{k=1}^r
(m_{c_k})_{j_{2c_k-1}, j_{2c_k} } x_{j_{2\pi(c_k)-1},j_{2c_k}} \\
=& \sum_{j_{2c_1-1},j_{2c_1},\dots, j_{2c_r-1},j_{2 c_r} } 
(m_{c_1})_{j_{2c_1-1},j_{2c_1}} x_{j_{2c_1}, j_{2c_2-1}} 
(m_{c_2})_{j_{2c_2-1},j_{2c_2}} x_{j_{2c_2}, j_{2c_3-1}} \cdots 
(m_{c_r})_{j_{2c_r-1},j_{2c_r}} x_{j_{2c_r}, j_{2c_1-1}} \\
=& \tr(m_{c_1} x m_{c_2} x \cdots m_{c_r} x) = R_c(x;m_1,\dots,m_n).
\end{align*}
We obtain \eqref{eq:RT1} by taking the product over all cycles in $\pi$.

Next we will show
\begin{equation} \label{eq:RT2}
R_{\pi}(x;m_1,\dots,m_i^{t},\dots, m_n) = T_{\zeta_i\tilde{\pi}} (x;m_1,\dots, m_n).
\end{equation}
We have 
$$
T_{\zeta_i \tilde{\pi}} (x;m_1,\dots, m_n) =
\sum_{j_1,\dots,j_{2n} } \prod_{k=1}^n  (m_k)_{j_{2k-1}, j_{2k}}
x_{j_{\zeta_i \tilde{\pi} (2k-1)},j_{\zeta_i \tilde{\pi}(2n)}}. 
$$
Letting $j_k'= j_{\zeta_i(k)}$ for all $k=1,2,\dots,2n$,
it is equal to
\begin{align*}
&  \sum_{j_1',\dots,j_{2n}'} \prod_{k=1}^n  (m_k)_{j'_{\zeta_i(2k-1)}, j'_{\zeta_i(2k)}}
x_{j'_{ \tilde{\pi} (2k-1)},j'_{\tilde{\pi}(2k)}} 
\\
=& \sum_{j'_1,\dots,j'_{2n}} (m_i^t)_{j'_{2i-1},j'_{2i}} 
x_{j'_{ \tilde{\pi} (2i-1)},j'_{\tilde{\pi}(2i)}}
\prod_{k \not= i}  (m_k)_{j'_{2k-1}, j_{2k}'}
x_{j'_{ \tilde{\pi} (2k-1)},j'_{\tilde{\pi}(2k)} } \\
=& T_{\tilde{\pi}}(x;m_1,\dots,m_i^t,\dots,m_n).
\end{align*}
Therefore, \eqref{eq:RT2} follows by \eqref{eq:RT1}.
Now the result can be obtained from \eqref{eq:RT1} and \eqref{eq:RT2}.
\end{proof}

\begin{example}
Consider
$$
\tr (x m_1 x m_4^t x m_5^t x m_2) \tr (x m_3 x m_7^t) \tr(x m_6),
$$
which is equal to $R_{\pi}(x;m_1^{\epsilon_1},\dots,m_7^{\epsilon_7})$ with
\begin{align*}
\pi=& (1 \to 4 \to 5 \to 2 \to 1)(3 \to 7 \to 3)(6 \to 6) \in S_7,\\
(\epsilon_1,\dots,\epsilon_7)=&(+1,+1,+1,-1,-1,+1,-1).
\end{align*}
It coincides with $T_{g}(x;m_1,\dots,m_7)$, 
where $g=\zeta_{4} \zeta_{5} \zeta_{7} \tilde{\pi}$, i.e.,
$$
g= (7 \to 8 \to 7)(9 \to 10 \to 9)(13 \to 14 \to 13)
(1 \to 7 \to 9 \to 3 \to 1)(5 \to 13 \to 5)(11 \to 11). 
$$
\end{example}

\begin{lem} \label{lem:Tright}
The function $S_{2n} \ni g \mapsto T_g(x;m_1,\dots,m_n)$ is 
right $H_n$-invariant:
$$
T_{g \zeta} (x;m_1,\dots,m_n)= T_{g}  (x;m_1,\dots,m_n) \qquad
\text{for all $\zeta \in H_n$ and $g \in S_{2n}$.}
$$ 
\end{lem}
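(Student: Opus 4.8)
The plan is to reduce the claim to the defining generators of $H_n$ and to exploit the two structural features of $T_g$: the $m$-factors $(m_k)_{j_{2k-1},j_{2k}}$ carry no dependence on $g$, while the $x$-factors $x_{j_{g(2l-1)},j_{g(2l)}}$ carry all of it, and $x$ is symmetric. Since $H_n$ is generated by the within-block transpositions $\zeta_k=(2k-1 \to 2k \to 2k-1)$ together with the double transpositions $(2i-1 \to 2j-1 \to 2i-1)(2i \to 2j \to 2i)$ (Subsection \ref{subsec:Hyper}), it suffices to verify $T_{g\zeta}=T_g$ when $\zeta$ ranges over these two families.

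For $\zeta=\zeta_k$, right multiplication sends the pair of arguments $(2k-1,2k)$ of $g$ to $(g(2k),g(2k-1))$ and leaves the remaining arguments fixed; hence the only affected $x$-factor turns from $x_{j_{g(2k-1)},j_{g(2k)}}$ into $x_{j_{g(2k)},j_{g(2k-1)}}$, and these coincide because $x$ is symmetric. For a double transposition $\tau=(2i-1 \to 2j-1 \to 2i-1)(2i \to 2j \to 2i)$, right multiplication by $\tau$ merely interchanges the $i$-th and $j$-th $x$-factors, $x_{j_{g(2i-1)},j_{g(2i)}}$ and $x_{j_{g(2j-1)},j_{g(2j)}}$, and fixes all others; since the product of $x$-factors is a product of scalars, this reordering does not change the value. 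In both cases the $m$-factors are untouched, so each summand is unchanged for every choice of $j_1,\dots,j_{2n}$, and therefore $T_{g\zeta}=T_g$.

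Conceptually, the computation shows that $\prod_{l=1}^n x_{j_{g(2l-1)},j_{g(2l)}}$ depends only on the unordered family of unordered pairs $\{\{g(2l-1),g(2l)\}\}_{l=1}^n$, that is, on the perfect matching $\mf{m}(g)$ attached to $g$; equivalently, in view of the decomposition \eqref{eq:LeftDecom}, it depends only on the coset $gH_n$. Right $H_n$-invariance of $T_g$ is precisely the assertion that $T_g$ factors through $gH_n$, which is the natural framing of the lemma and the reason it will be convenient later.

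I do not anticipate a genuine obstacle: the statement is a direct verification. The one point needing care is the bookkeeping of the composition $g\zeta$, namely that right multiplication by $\zeta$ acts on the \emph{argument} of $g$, and so permutes the positions $g(1),\dots,g(2n)$ rather than the summation indices $j_1,\dots,j_{2n}$. Keeping this convention straight is exactly what makes the two generator cases transparent.
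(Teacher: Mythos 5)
Your proposal is correct and follows essentially the same route as the paper, which also reduces to the two families of generators of $H_n$ and notes the verification is immediate. You simply spell out the termwise check (symmetry of $x$ for the within-block transpositions, reordering of scalar factors for the double transpositions) that the paper leaves as ``clear,'' including the correct observation that right multiplication acts on the arguments of $g$ rather than the summation indices.
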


\begin{proof}
It is enough to check for $\zeta=(2i-1 \to 2i \to 2i-1)$ and $(2i-1 \to 2j-1 \to 2i-1)
(2i \to 2j \to 2i)$
because $H_n$ is generated by them. However, it is clear.
\end{proof}

The moment of the form $\bE[ R_\pi(W^{\pm 1}; m_1^{\epsilon_1}, \dots,m_n^{\epsilon_n})]$
may be given by $\bE[ T_{g}  (W^{\pm 1};m_1,\dots,m_n)]$ with some $g \in S_{2n}$. 
Hence we now compute the moments $\bE[T_g(W^{\pm 1} ;m_1,\dots,m_n)]$.
First of all, we note that the formulas in Theorem \ref{thm:GM} and 
Theorem \ref{thm:InvGM} can be expressed  in the forms
\begin{align}
\bE[ W_{k_1 k_2}  \cdots W_{k_{2n-1}, k_{2n}}]=& 2^{-n} (2^n n!)^{-1}
\sum_{g \in S_{2n}} (2\beta)^{\kappa(g)} \sigma_{k_{g(1)},k_{g(2)}} \cdots 
\sigma_{k_{g(2n-1)},k_{g(2n)}}, \label{eq:thm1kai} \\
\bE[ W^{k_1 k_2}  \cdots W^{k_{2n-1}, k_{2n}}]=&  (2^n n!)^{-1}
\sum_{g \in S_{2n}} \tWg(g;\gamma) \sigma^{k_{g(1)},k_{g(2)}} \cdots 
\sigma^{k_{g(2n-1)},k_{g(2n)}}.
\end{align}

\begin{thm} \label{thm:Trace1}
Let $W \sim W_{d}(\beta,\sigma;\bR)$ and let $\gamma$ be as in Theorem \ref{thm:InvGM}.
Let $m_1,\dots,m_n$ be $d \times d$ matrices and let $ g \in S_{2n}$. Then
\begin{align*}
\bE [T_g(W;m_1,\dots,m_n)]=&
2^{-n} \sum_{\mf{n} \in \mcal{M}(2n)} 
(2\beta)^{\kappa(g^{-1} \mf{n})} 
T_{\mf{n}} (\sigma;m_1,\dots,m_n), \\
\bE [T_g(W^{-1};m_1,\dots,m_n)]=&
\sum_{\mf{n} \in \mcal{M}(2n)} 
\widetilde{\mr{Wg}}(g^{-1} \mf{n};\gamma)
T_{\mf{n}} (\sigma^{-1};m_1,\dots,m_n).
\end{align*}
\end{thm}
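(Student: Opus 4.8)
The plan is to substitute the explicit moment formulas \eqref{eq:thm1kai} and its inverse counterpart directly into the definition of $T_g$ and then collapse the resulting sum over $S_{2n}$ to a sum over $\mcal{M}(2n)$ using $H_n$-invariance. I would treat the two identities in parallel, since they differ only in which $H_n$-biinvariant weight on $S_{2n}$ appears.

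First I would expand by linearity, pulling the expectation inside the index sum defining $T_g$:
$$
\bE[T_g(W;m_1,\dots,m_n)]=\sum_{j_1,\dots,j_{2n}=1}^d (m_1)_{j_1 j_2}\cdots (m_n)_{j_{2n-1}j_{2n}}\, \bE[W_{j_{g(1)} j_{g(2)}}\cdots W_{j_{g(2n-1)}j_{g(2n)}}].
$$
Applying \eqref{eq:thm1kai} with the identification $k_l=j_{g(l)}$, the key bookkeeping step is that $k_{h(l)}=j_{g(h(l))}=j_{(gh)(l)}$, so each factor $\sigma_{k_{h(2i-1)}k_{h(2i)}}$ becomes $\sigma_{j_{(gh)(2i-1)}j_{(gh)(2i)}}$. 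Carrying out the inner $j$-sum therefore reassembles precisely $T_{gh}(\sigma;m_1,\dots,m_n)$, giving
$$
\bE[T_g(W;m_1,\dots,m_n)]=2^{-n}(2^n n!)^{-1}\sum_{h\in S_{2n}}(2\beta)^{\kappa(h)}T_{gh}(\sigma;m_1,\dots,m_n).
$$
Reindexing by $k=gh$ (so $\kappa(h)=\kappa(g^{-1}k)$) turns the right-hand side into $2^{-n}(2^n n!)^{-1}\sum_{k\in S_{2n}}(2\beta)^{\kappa(g^{-1}k)}T_k(\sigma;m_1,\dots,m_n)$.

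Next I would collapse this sum over $S_{2n}$ to one over $\mcal{M}(2n)$. Writing $k=\mf{n}\zeta$ with $\mf{n}\in\mcal{M}(2n)$ and $\zeta\in H_n$ via \eqref{eq:LeftDecom}, I use that both ingredients are right $H_n$-invariant in $k$: the weight $\kappa(g^{-1}k)$ because $\kappa$ is constant on right $H_n$-cosets, and $T_k(\sigma;\dots)$ by Lemma \ref{lem:Tright}. Hence each coset $\mf{n}H_n$ contributes the same value $(2\beta)^{\kappa(g^{-1}\mf{n})}T_{\mf{n}}(\sigma;\dots)$ on all of its $|H_n|=2^n n!$ elements, and the sum over $\zeta$ produces a factor $2^n n!$ that cancels the $(2^n n!)^{-1}$ in front, yielding the first claimed formula. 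The inverse case runs identically: the second expanded formula gives $(2^n n!)^{-1}\sum_{h}\tWg(h;\gamma)T_{gh}(\sigma^{-1};\dots)$, and after reindexing $k=gh$ the same collapse applies, using that $\tWg(\cdot;\gamma)\in\mcal{H}_n$ is $H_n$-biinvariant, in particular right $H_n$-invariant.

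I expect the only genuine obstacle to be notational rather than conceptual: one must verify that the substitution $k_l=j_{g(l)}$ correctly converts the permutation $h$ of the moment formula into the composition $gh$ acting on the $j$-indices, so that the inner sum is honestly $T_{gh}$ and not a misordered variant. Once that composition is pinned down, the right $H_n$-invariance of $\kappa$, of $T$, and of $\tWg$ does all the remaining work, and no new estimate or structural input is required.
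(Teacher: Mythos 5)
Your proposal is correct and follows essentially the same route as the paper: expand $T_g$ by linearity, substitute the rewritten moment formula \eqref{eq:thm1kai} (resp.\ its $\widetilde{\mr{Wg}}$ analogue), reindex by $h=gg'$ so the weight becomes $(2\beta)^{\kappa(g^{-1}h)}$ (resp.\ $\widetilde{\mr{Wg}}(g^{-1}h;\gamma)$), and collapse the sum over $S_{2n}$ to $\mcal{M}(2n)$ via the coset decomposition \eqref{eq:LeftDecom} together with the right $H_n$-invariance of $T$ (Lemma \ref{lem:Tright}) and of the $H_n$-biinvariant weight. Your explicit verification that $k_{h(l)}=j_{(gh)(l)}$ is exactly the bookkeeping the paper performs implicitly in its change of variables, so no gap remains.
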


\begin{proof}
Using \eqref{eq:thm1kai} (or Theorem \ref{thm:GM}),
\begin{align*}
&  \bE [T_g(W;m_1,\dots,m_n)] \\
=& \sum_{j_1,\dots,j_{2n}} \(\prod_{k=1}^n (m_k)_{j_{2k-1},j_{2k}}\)  
\bE [W_{j_{g(1)}, j_{g(2)}} \cdots W_{j_{g(2n-1)}, j_{g(2n)}}] \\
=& \sum_{j_1,\dots,j_{2n}} \(\prod_{k=1}^n (m_k)_{j_{2k-1},j_{2k}} \) 
2^{-n} (2^n n!)^{-1} \sum_{g' \in S_{2n}} 
(2\beta)^{\kappa(g')} \sigma_{j_{g g'(1)}, j_{g g'(2)}} \cdots 
\sigma_{j_{g g'(2n-1)}, j_{g g'(2n)}} \\
\intertext{and, letting $h=g g'$,}
=& 2^{-n} (2^n n!)^{-1} \sum_{h \in S_{2n}} 
(2\beta)^{\kappa(g^{-1} h)} 
\sum_{j_1,\dots,j_{2n}} \prod_{k=1}^n (m_k)_{j_{2k-1},j_{2k}}
\sigma_{j_{h(2k-1)}, j_{h(2k)}} \\
=& 2^{-n} (2^n n!)^{-1} \sum_{h \in S_{2n}} 
(2\beta)^{\kappa(g^{-1} h)} 
T_h (\sigma;m_1,\dots,m_n) \\
=& 2^{-n} \sum_{\mf{n} \in \mcal{M}(2n)} 
(2\beta)^{\kappa(g^{-1} \mf{n})} 
T_{\mf{n}} (\sigma;m_1,\dots,m_n).
\end{align*}
Here the last equality follows from Lemma \ref{lem:Tright} and \eqref{eq:LeftDecom}.
Thus the first formula has been proved.
The same applies to the second formula.
\end{proof}

It follows from Lemma \ref{lem:RT} and Theorem \ref{thm:Trace1} that,
for $\pi \in S_n$ and $(\epsilon_1,\dots, \epsilon_n) \in \{-1,+1\}^n$,
\begin{align}
\bE [R_\pi(W;m_1^{\epsilon_1},\dots,m_n^{\epsilon_n})]=&
2^{-n} \sum_{\mf{n} \in \mcal{M}(2n)} 
(2\beta)^{\kappa(g^{-1} \mf{n})} 
T_{\mf{n}} (\sigma;m_1,\dots,m_n), \label{eq:MixTraceMomentW} \\
\bE [R_\pi(W;m_1^{\epsilon_1},\dots,m_n^{\epsilon_n})]=&
\sum_{\mf{n} \in \mcal{M}(2n)} 
\widetilde{\mr{Wg}}(g^{-1} \mf{n};\gamma)
T_{\mf{n}} (\sigma^{-1};m_1,\dots,m_n),
\end{align}
where $g$ is as in Lemma \ref{lem:RT}.
We remark that \eqref{eq:MixTraceMomentW} is equivalent to 
\cite[Corollary 14]{GLM2}.

\subsection{Averages of invariant polynomials}

Given a partition $\lambda$ of $n$,
we define two functions $\bm{Z}_\lambda$ and $\bm{p}_\lambda$ on 
$\Omega=\Sym^+(d)$ by
$$
\bm{Z}_\lambda(x)= Z_\lambda(a_1,a_2,\dots,a_d,0,0,\dots) \qquad
\text{and} \qquad
\bm{p}_\lambda(x)= p_\lambda(a_1,a_2,\dots,a_d,0,0,\dots),
$$
where $a_1,\dots,a_d$ are eigenvalues of $x \in \Omega$,
and $Z_\lambda, p_\lambda$ are symmetric functions defined in Subsection \ref{subsec:zonal}.
In particular, we have
$$
\bm{p}_\lambda(x)= \prod_{i=1}^{\ell(\lambda)} \tr (x^{\lambda_i})=
\prod_{r \ge 1} \( \tr(x^r) \)^{m_r(\lambda)},
$$
where $m_r(\lambda)$ is the multiplicity of $r$ in $\lambda$.
From \eqref{eq:Zp} and \eqref{eq:pZ}, we have
\begin{equation} \label{eq:ZpZ}
\bm{Z}_\lambda= 
2^n n! \sum_{\rho \vdash n} 2^{-\ell(\rho)} z_\rho^{-1} \omega^\lambda_\rho \bm{p}_\rho 
\qquad \text{and} \qquad
\bm{p}_\rho= \frac{2^n n!}{(2n)!} \sum_{\lambda \vdash n} f^{2\lambda} \omega^\lambda_\rho
\bm{Z}_\lambda.
\end{equation}

Recall  $C_\lambda(z)=\prod_{(i,j) \in \lambda} (z+2j-i-1)$.
The following theorem, derived from Theorem \ref{thm:GM} and Theorem \ref{thm:InvGM},
is exactly the real case of Proposition 5 and 6 in \cite{LM1}.

\begin{thm} \label{thm:Invariants}
Let $W \sim W_{d}(\beta,\sigma;\bR)$ and let $\gamma$ be as in Theorem \ref{thm:InvGM}.
For a partition $\lambda$ of $n$,
\begin{align*}
\bE[ \bm{Z}_\lambda(W)] =& 2^{-n} C_\lambda(2\beta) \bm{Z}_\lambda(\sigma). \\
\bE[ \bm{Z}_\lambda(W^{-1})] =& (-1)^n 2^{n} C_\lambda(-2\gamma)^{-1} \bm{Z}_\lambda(\sigma^{-1}).
\end{align*}
\end{thm}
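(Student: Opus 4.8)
The plan is to reduce both identities to a single computation of the trace--power moments $\bE[\bm{p}_\rho(W^{\pm1})]$, and then to diagonalize the result inside the Hecke algebra $\mcal{H}_n$, where the zonal polynomials appear as the natural eigenvectors.

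\emph{Step 1 (reduction to power sums).} By the change of basis \eqref{eq:ZpZ}, namely $\bm{Z}_\lambda = 2^n n! \sum_\rho 2^{-\ell(\rho)} z_\rho^{-1} \omega^\lambda_\rho \bm{p}_\rho$, it suffices to evaluate $\bE[\bm{p}_\rho(W)]$ and $\bE[\bm{p}_\rho(W^{-1})]$ for every $\rho \vdash n$. Fix a permutation $\pi_\rho \in S_n$ of cycle type $\rho$. Taking all $m_j = I_d$ in the definition of $R_\pi$ gives $R_{\pi_\rho}(x; I_d, \dots, I_d) = \prod_{c \in C(\pi_\rho)} \tr(x^{|c|}) = \bm{p}_\rho(x)$, so by Lemma \ref{lem:RT} (all $\epsilon_j = +1$) one has $\bm{p}_\rho(W^{\pm1}) = T_{\tilde{\pi}_\rho}(W^{\pm1}; I_d, \dots, I_d)$. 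The key observation is the contraction identity
$$
T_g(\sigma; I_d, \dots, I_d) = \bm{p}_{\rho(g)}(\sigma) \qquad (g \in S_{2n}),
$$
where $\rho(g)$ denotes the coset-type of $g$: the factors $(I_d)_{j_{2k-1}, j_{2k}} = \delta_{j_{2k-1}, j_{2k}}$ identify the indices along the edges $\{2k-1, 2k\}$, so summing the remaining $\sigma$-factors over the graph $G(g)$ turns each connected component on $2\ell$ vertices into a factor $\tr(\sigma^\ell)$. Tracing the edges $\{2k-1,2k\}$ and $\{2\pi_\rho(k)-1, 2k\}$ also shows that $\tilde{\pi}_\rho$ itself has coset-type $\rho$.

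\emph{Step 2 (passage to a convolution).} Applying Theorem \ref{thm:Trace1} with $g = \tilde{\pi}_\rho$ and $m_j = I_d$, and using Step 1, one gets
$$
\bE[\bm{p}_\rho(W)] = 2^{-n} \sum_{\mf{n} \in \mcal{M}(2n)} (2\beta)^{\kappa(\tilde{\pi}_\rho^{-1} \mf{n})} \bm{p}_{\rho(\mf{n})}(\sigma).
$$
Both $\kappa(\tilde{\pi}_\rho^{-1} \cdot)$ and $g \mapsto \bm{p}_{\rho(g)}(\sigma)$ are constant on right $H_n$-cosets, so by \eqref{eq:LeftDecom} the matching sum equals $(2^n n!)^{-1}$ times the corresponding sum over all of $S_{2n}$. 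Writing $G^O(g; z) = z^{\kappa(g)}$ and $P_\sigma(g) = \bm{p}_{\rho(g)}(\sigma)$ — both $H_n$-biinvariant and invariant under $g \mapsto g^{-1}$ — this sum is exactly the convolution value $(G^O(\cdot; 2\beta) * P_\sigma)(\tilde{\pi}_\rho^{-1})$, whence $\bE[\bm{p}_\rho(W)] = 2^{-n}(2^n n!)^{-1}(G^O(\cdot;2\beta) * P_\sigma)(\tilde{\pi}_\rho^{-1})$.

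\emph{Step 3 (diagonalization).} Expand both factors in zonal spherical functions: \eqref{eq:ZpZ} gives $P_\sigma = \frac{2^n n!}{(2n)!} \sum_\lambda f^{2\lambda} \bm{Z}_\lambda(\sigma) \omega^\lambda$, while \eqref{eq:ExpandG} gives $G^O(\cdot; 2\beta) = \frac{2^n n!}{(2n)!} \sum_\mu f^{2\mu} C_\mu(2\beta) \omega^\mu$. The orthogonality \eqref{eq:zonalorthogonal} collapses $\omega^\mu * \omega^\lambda$ to a single surviving term, and evaluating at the coset-type $\rho$ yields $\bE[\bm{p}_\rho(W)] = 2^{-n} \frac{2^n n!}{(2n)!} \sum_\lambda f^{2\lambda} C_\lambda(2\beta) \bm{Z}_\lambda(\sigma) \omega^\lambda_\rho$. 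Feeding this into the $\bm{p} \to \bm{Z}$ expansion and invoking the biorthogonality $\sum_\rho 2^{-\ell(\rho)} z_\rho^{-1} \omega^\lambda_\rho \omega^\mu_\rho = \delta_{\lambda\mu} \frac{(2n)!}{f^{2\lambda}(2^n n!)^2}$ — itself obtained from \eqref{eq:zonalorthogonal} by evaluating $(\omega^\lambda * \omega^\mu)$ at the identity and summing over the double cosets $H_\rho$ with $|H_\rho| = (2^n n!)^2/(2^{\ell(\rho)} z_\rho)$ — everything telescopes to $\bE[\bm{Z}_\lambda(W)] = 2^{-n} C_\lambda(2\beta) \bm{Z}_\lambda(\sigma)$. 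The inverse case is proved verbatim, using the second line of Theorem \ref{thm:Trace1}, replacing $\sigma$ by $\sigma^{-1}$ and $G^O(\cdot; 2\beta)$ by $\tWg(\cdot; \gamma) = (-1)^n 2^n \Wg^O(\cdot; -2\gamma)$ as in \eqref{eq:tWg}; the expansion \eqref{eq:DefWg} then contributes $C_\lambda(-2\gamma)^{-1}$ in place of $C_\lambda(2\beta)$, and the prefactor $(-1)^n 2^n$ carries through to give the second formula. I expect the real work to be purely bookkeeping: verifying the contraction identity of Step 1 and, above all, tracking the normalization constants $2^n n!$, $(2n)!$, and $f^{2\lambda}$ and establishing the biorthogonality relation cleanly, since a single misplaced power of two would spoil the final constants.
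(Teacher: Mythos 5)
Your proposal is correct: I checked the normalizations at each step, and both the intermediate formula $\bE[\bm{p}_\rho(W)] = 2^{-n}\frac{2^n n!}{(2n)!}\sum_{\lambda \vdash n} f^{2\lambda} C_\lambda(2\beta) \bm{Z}_\lambda(\sigma)\, \omega^\lambda_\rho$ and the biorthogonality constant $\delta_{\lambda\mu}(2n)!/\bigl(f^{2\lambda}(2^n n!)^2\bigr)$ come out right, so the telescoping does yield the stated constants in both formulas. Your route uses exactly the paper's toolkit --- Theorem \ref{thm:Trace1}, the contraction $T_g(x;I_d,\dots,I_d)=\bm{p}_{\rho(g)}(x)$, the expansions \eqref{eq:ExpandG} and \eqref{eq:DefWg}, and the orthogonality \eqref{eq:zonalorthogonal} --- but in the reverse logical order: you compute $\bE[\bm{p}_\rho(W^{\pm1})]$ in closed form first (thereby proving the paper's Corollary \ref{cor:Trace} en route) and then recover the zonal formula by inverting with the biorthogonality relation, whereas the paper expands $\bm{Z}_\lambda(W)$ directly as $(2^n n!)^{-1}\sum_{g}\omega^\lambda(g)\,\bE[T_g(W;I_d,\dots,I_d)]$ and hits it with the eigenrelation $\omega^\lambda * \mr{G}^O(\cdot;z) = 2^n n!\, C_\lambda(z)\,\omega^\lambda$, needing orthogonality only once and deriving Corollary \ref{cor:Trace} afterwards as a consequence of Theorem \ref{thm:Invariants}. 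The paper's route is therefore a bit shorter and avoids two facts your argument genuinely relies on and correctly supplies: the invariance of coset-type under $g \mapsto g^{-1}$ (the Gelfand-pair symmetry, needed to read your sum as a convolution value and to evaluate $\omega^\lambda(\tilde{\pi}_\rho^{-1})=\omega^\lambda_\rho$) and the biorthogonality of the $\omega^\lambda_\rho$; in exchange, your route produces the power-sum moment formula as a free byproduct and makes the ``$\bm{Z}_\lambda$ as eigenvector'' mechanism explicit. One further small divergence: you prove the contraction identity $T_g(\sigma;I_d,\dots,I_d)=\bm{p}_{\rho(g)}(\sigma)$ for arbitrary $g$ by tracing the graph $G(g)$ directly (which needs the symmetry of $\sigma$, as the components are built from unordered edges), while the paper gets it more indirectly from $H_n$-biinvariance (Lemma \ref{lem:Tright}) plus the single evaluation $T_{\tilde{\pi}}=R_\pi$ of Lemma \ref{lem:RT}; both verifications are sound.
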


\begin{proof}
First of all, we note that 
$$
\bm{p}_\rho(x)= T_{g}(x;\underbrace{I_d,\dots,I_d}_n)
$$
for a permutation $g$ in $S_{2n}$ of coset-type $\rho$ 
and for a matrix $x$ in $\Omega$.
Indeed, since the function $S_{2n} \ni g \mapsto T_{g}(x;I_d,\dots,I_d)$
is $H_n$-biinvariant, the image depends only on the coset-type.
If $\pi$ is a permutation in $S_{n}$ of cycle-type $\rho$, then
$\tilde{\pi}$ is of coset-type $\rho$, and therefore
$T_{g}(x;I_d,\dots,I_d)= T_{\tilde{\pi}}(x;I_d,\dots,I_d)
= R_\pi(x;I_d,\dots,I_d)=
\bm{p}_\rho(x)$ by Lemma \ref{lem:RT}.

From the first formula in \eqref{eq:ZpZ} and 
 the double decomposition \eqref{eq:doubledecomposition}, we have 
\begin{align*}
\bE[ \bm{Z}_\lambda(W)]
=& 2^n n! \sum_{\rho \vdash n} 2^{-\ell(\rho)} z_\rho^{-1} \omega^\lambda_\rho \bE[\bm{p}_\rho(W)] \\
=& 2^n n! \sum_{\rho \vdash n} 2^{-\ell(\rho)} z_\rho^{-1}  \frac{1}{|H_\rho|}
\sum_{g \in H_\rho}\omega^\lambda(g) \bE[T_g(W;I_d,\dots,I_d)] \\
=& (2^n n!)^{-1} \sum_{g \in S_{2n}}\omega^\lambda(g) 
\bE[T_g(W;I_d,\dots,I_d)].
\end{align*}
It follows from Theorem \ref{thm:Trace1} that
\begin{align*}
\bE[ \bm{Z}_\lambda(W)]
=& (2^n n!)^{-2} \sum_{g \in S_{2n}}\omega^\lambda(g)  2^{-n} 
\sum_{g' \in S_{2n}} (2\beta)^{\kappa(g^{-1} g')} 
T_{g'}(\sigma;I_d,\dots,I_d) \\
=& (2^n n!)^{-2}  2^{-n}  \sum_{g' \in S_{2n}} \((\omega^\lambda *G^O(\cdot;2\beta)) (g') \)
T_{g'}(\sigma;I_d,\dots,I_d).
\end{align*}
Since $\omega^\lambda *G^O(\cdot;z)
=2^n n! C_\lambda(z) \omega^\lambda$
by \eqref{eq:ExpandG} and \eqref{eq:zonalorthogonal}, we have
$$
\bE[ \bm{Z}_\lambda(W)] = (2^n n!)^{-1}
2^{-n} C_\lambda(2\beta) \sum_{g' \in S_{2n}} \omega^\lambda(g')   T_{g'}(\sigma;I_d,\dots,I_d).
$$
Since 
$$
\sum_{g' \in S_{2n}} \omega^\lambda(g')   T_{g'}(\sigma;I_d,\dots,I_d)
= \sum_{\rho \vdash n} |H_\rho| \omega^\lambda_\rho \bm{p}_\rho(\sigma)
=\sum_{\rho \vdash n} \frac{(2^n n!)^2}{2^{\ell(\rho)} z_\rho} 
\omega^\lambda_\rho \bm{p}_\rho(\sigma) = 2^n n! \bm{Z}_\lambda(\sigma)
$$
by the first formula in \eqref{eq:ZpZ},
our first result follows.
The proof of our second result is similar.
\end{proof}

The following is equivalent to the real case of \cite[Theorem 2]{LM1}.

\begin{cor} \label{cor:Trace}
Let $W \sim W_{d}(\beta,\sigma;\bR)$ and let $\gamma$ be as in Theorem \ref{thm:InvGM}.
For a partition $\mu$ of $n$,
\begin{align*}
\bE[ \bm{p}_\mu(W)] =& \frac{(2^n n!)^2}{(2n)!}  
\sum_{\rho \vdash n } 2^{-\ell(\rho)}z_\rho^{-1} \( 2^{-n}\sum_{\lambda \vdash n}  C_\lambda(2\beta)
f^{2\lambda} \omega^\lambda_\mu \omega^\lambda_\rho \) \bm{p}_\rho(\sigma), \\
\bE[ \bm{p}_\mu(W^{-1})] =& \frac{(2^n n!)^2}{(2n)!}  
\sum_{\rho \vdash n } 2^{-\ell(\rho)}z_\rho^{-1} \( (-1)^n 2^{n}\sum_{\lambda \vdash n}  C_\lambda(-2\gamma)^{-1}
f^{2\lambda} \omega^\lambda_\mu \omega^\lambda_\rho \) \bm{p}_\rho(\sigma^{-1}).
\end{align*}
\end{cor}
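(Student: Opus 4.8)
The plan is to reduce the computation of $\bE[\bm{p}_\mu(W^{\pm 1})]$ to the already-established Theorem \ref{thm:Invariants}, which gives the averages of the zonal polynomials $\bm{Z}_\lambda(W^{\pm 1})$. The two change-of-basis identities in \eqref{eq:ZpZ} between the power-sum basis $\{\bm{p}_\rho\}$ and the zonal basis $\{\bm{Z}_\lambda\}$ will serve as the bridge: I expand into the zonal basis to apply Theorem \ref{thm:Invariants}, then expand back into power sums.

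Concretely, I would first write $\bm{p}_\mu$ in the zonal basis using the second identity in \eqref{eq:ZpZ} (with $\rho$ replaced by $\mu$),
$$
\bm{p}_\mu = \frac{2^n n!}{(2n)!} \sum_{\lambda \vdash n} f^{2\lambda} \omega^\lambda_\mu \bm{Z}_\lambda,
$$
and take expectations term by term. By linearity this turns $\bE[\bm{p}_\mu(W)]$ into a linear combination of the $\bE[\bm{Z}_\lambda(W)]$, each of which equals $2^{-n} C_\lambda(2\beta) \bm{Z}_\lambda(\sigma)$ by the first formula of Theorem \ref{thm:Invariants}. Next I would re-expand each $\bm{Z}_\lambda(\sigma)$ into power sums via the first identity in \eqref{eq:ZpZ},
$$
\bm{Z}_\lambda(\sigma) = 2^n n! \sum_{\rho \vdash n} 2^{-\ell(\rho)} z_\rho^{-1} \omega^\lambda_\rho \bm{p}_\rho(\sigma),
$$
which produces a double sum over $\lambda,\rho \vdash n$. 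Collecting the scalar prefactor $\tfrac{2^n n!}{(2n)!}\cdot 2^n n! = \tfrac{(2^n n!)^2}{(2n)!}$ and interchanging the order of summation so that $\rho$ becomes the outer index yields exactly the claimed formula for $\bE[\bm{p}_\mu(W)]$. The inverse case is handled identically: I use the second formula of Theorem \ref{thm:Invariants}, namely $\bE[\bm{Z}_\lambda(W^{-1})] = (-1)^n 2^n C_\lambda(-2\gamma)^{-1} \bm{Z}_\lambda(\sigma^{-1})$, and replace $\sigma$ by $\sigma^{-1}$ throughout.

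The argument is essentially bookkeeping, requiring no structural input beyond Theorem \ref{thm:Invariants} and the basis-transition relations \eqref{eq:ZpZ}. The only step demanding care is tracking the scalar factors and the interchange of the two summations, so that the coefficient $C_\lambda(2\beta)$ (respectively $C_\lambda(-2\gamma)^{-1}$) lands inside the parenthesized inner sum over $\lambda$, with the two zonal spherical values $\omega^\lambda_\mu$ and $\omega^\lambda_\rho$ appearing together—the first inherited from the expansion of $\bm{p}_\mu$ and the second from the re-expansion of $\bm{Z}_\lambda$—precisely as displayed in the statement.
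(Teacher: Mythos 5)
Your proposal is correct and matches the paper's own proof, which simply cites Theorem \ref{thm:Invariants} and the transition identities \eqref{eq:ZpZ}: expanding $\bm{p}_\mu$ into the zonal basis, applying the zonal-average formula, and re-expanding $\bm{Z}_\lambda(\sigma^{\pm 1})$ into power sums is exactly the intended bookkeeping. Your tracking of the prefactor $\tfrac{(2^n n!)^2}{(2n)!}$ and the placement of $C_\lambda(2\beta)$ (resp.\ $(-1)^n 2^n C_\lambda(-2\gamma)^{-1}$) inside the inner sum over $\lambda$ is accurate.
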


\begin{proof}
They follow from Theorem \ref{thm:Invariants} and  \eqref{eq:ZpZ}.
\end{proof}

\begin{cor} \label{cor:Trace2}
Let $W \sim W_{d}(\beta,\sigma;\bR)$ and let $\gamma$ be as in Theorem \ref{thm:InvGM}.
Then
\begin{align*}
\bE[(\tr W)^n] =& \sum_{\rho \vdash n} \frac{n!}{z_\rho} \beta^{\ell(\rho)} 
\bm{p}_\rho(\sigma), \\
\bE[(\tr W^{-1} )^n] =& 
\sum_{\rho \vdash n}2^{n-\ell(\rho)} \frac{n!}{z_\rho} 
\tWg(\rho;\gamma) \bm{p}_\rho(\sigma^{-1}).
\end{align*}
\end{cor}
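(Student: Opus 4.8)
The plan is to recognize both left-hand sides as the special case $\mu=(1^n)$ of the power-sum averages already computed in Corollary \ref{cor:Trace}, and then to collapse the remaining sum over $\lambda$ using the zonal specializations in \eqref{eq:specialZp} and the definition \eqref{eq:tWg} of $\tWg$. First I would note that the partition $(1^n)$ has $n$ parts, all equal to $1$, so that $\bm{p}_{(1^n)}(x)=\prod_{i=1}^{n}\tr(x)=(\tr x)^n$ for every $x\in\Omega$. Hence $\bE[(\tr W)^n]=\bE[\bm{p}_{(1^n)}(W)]$ and $\bE[(\tr W^{-1})^n]=\bE[\bm{p}_{(1^n)}(W^{-1})]$, and both are given by Corollary \ref{cor:Trace} with $\mu=(1^n)$.

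Next I would substitute the normalization $\omega^\lambda_{(1^n)}=1$ (recorded just after \eqref{eq:zonalorthogonal}) into both formulas of Corollary \ref{cor:Trace}. This removes one of the two zonal-spherical factors and leaves, inside the sum over $\rho$, only the inner sum $\sum_{\lambda\vdash n}f^{2\lambda}\omega^\lambda_\rho\,C_\lambda(z)^{\pm1}$, with $z=2\beta$ and exponent $+1$ in the $W$ case, and $z=-2\gamma$ with exponent $-1$ in the $W^{-1}$ case.

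For the first formula, the second identity in \eqref{eq:specialZp} gives $\sum_{\lambda}f^{2\lambda}\omega^\lambda_\rho\,C_\lambda(2\beta)=\frac{(2n)!}{2^n n!}(2\beta)^{\ell(\rho)}$. Substituting this, the prefactor $\frac{(2^n n!)^2}{(2n)!}\cdot2^{-n}$ collapses to $n!$, while $2^{-\ell(\rho)}(2\beta)^{\ell(\rho)}=\beta^{\ell(\rho)}$, which yields $\bE[(\tr W)^n]=\sum_{\rho\vdash n}\frac{n!}{z_\rho}\beta^{\ell(\rho)}\bm{p}_\rho(\sigma)$, as claimed. For the second formula, I would instead read off from \eqref{eq:tWg} that, for $g$ of coset-type $\rho$,
\[
\sum_{\lambda\vdash n}\frac{f^{2\lambda}}{C_\lambda(-2\gamma)}\omega^\lambda_\rho=\frac{(2n)!}{2^n n!}(-1)^n 2^{-n}\,\tWg(\rho;\gamma).
\]
Substituting this, the factor $(-1)^n 2^{n}$ already present in Corollary \ref{cor:Trace} cancels the $(-1)^n 2^{-n}$ coming from the inner sum, so the prefactor collapses to $2^n n!$; collecting $2^n\cdot2^{-\ell(\rho)}=2^{n-\ell(\rho)}$ then gives the second identity.

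The entire argument is a specialization followed by constant bookkeeping. I expect no genuine obstacle: the only point requiring care is tracking the powers of $2$ and the sign $(-1)^n$ through the two collapsing identities so that they cancel exactly, producing the prefactors $n!/z_\rho$ and $2^{n-\ell(\rho)}n!/z_\rho$ respectively.
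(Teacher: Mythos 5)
Your proposal is correct and follows the paper's own route exactly: specializing Corollary \ref{cor:Trace} at $\mu=(1^n)$ with $\omega^\lambda_{(1^n)}=1$, collapsing the $\lambda$-sum via the second formula in \eqref{eq:specialZp} for the $W$ case and via \eqref{eq:DefWg} together with \eqref{eq:tWg} for the $W^{-1}$ case. Your constant bookkeeping (the prefactors collapsing to $n!$ and $2^n n!$, and $2^n\cdot 2^{-\ell(\rho)}=2^{n-\ell(\rho)}$) checks out.
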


\begin{proof}
The first result follows by
letting $\mu=(1^n)$ in Corollary \ref{cor:Trace} and by using 
the second formula in \eqref{eq:specialZp}.
The second one also follows by \eqref{eq:DefWg}.
\end{proof}

\subsection{Examples for low degrees}

If we apply our theorems 
(Theorem \ref{thm:GM} -- Corollary \ref{cor:Trace2}),
we can compute various Wishart moments for low degrees 
$n=1,2,3,4$ easily; see Appendix.

\section*{Acknowledgements}

I would like to thank Piotr Graczyk for getting me interested 
in  Wishart distributions on May 2009,
and thank Hideyuki Ishi, 
who organized the meeting where I met P. Graczyk.
I also thank Yasuhide Numata for his talk on noncentral Wishart distributions
in March 2010.



\noindent
\textsc{Sho Matsumoto \\
Graduate School of Mathematics, Nagoya University, Nagoya, 464-8602, Japan.} \\
E-mail: \verb|sho-matsumoto@math.nagoya-u.ac.jp|

\newpage
\appendix
\section{Appendix: Examples for low degrees} \label{sec:Example}

We give explicit examples of our theorems obtained in the present papers.
Let $W \sim W_d(\beta,\sigma;\bR)$ and set $\gamma=\beta-\frac{d+1}{2}$ as usual.
Let $m_1,m_2,\dots$ be $d \times d$ matrices.

\subsection*{Degree 1}

Suppose $\gamma >0$.
It follows from Theorem \ref{thm:GM} and Theorem \ref{thm:InvGM} that
\begin{equation}
\bE[W_{ij}]= \beta \sigma_{ij} \qquad  \text{and} \qquad
\bE[W^{ij}]= \frac{1}{\gamma} \sigma^{ij}
\end{equation}
for $1 \le i,j \le d$.
It is immediate to see that
\begin{align}
\bE[W]=& \beta \sigma, & \bE[W^{-1}]=& \gamma^{-1} \sigma^{-1}, \\
\bE[\tr (W m_1)] =& \beta \tr (\sigma m_1), & 
\bE[\tr(W^{-1}m_1)]=& \gamma^{-1} \tr(\sigma^{-1}m_1).
\end{align}

\subsection*{Degree 2}

Suppose $\gamma >0$ but $\gamma \not=1$ (see Remark \ref{remark:gamma}).
From \eqref{eq:tWg} and Example \ref{ex:Wgz},
\begin{align*}
\widetilde{\mr{Wg}}(\{ \{1,2\}, \{3,4\} \};\gamma)=& 
\frac{2\gamma-1}{\gamma(\gamma-1)(2\gamma+1)}, \\
\widetilde{\mr{Wg}}(\{ \{1,3\}, \{2,4\} \};\gamma)= 
\widetilde{\mr{Wg}}(\{ \{1,4\}, \{2,3\} \};\gamma)=& 
\frac{1}{\gamma(\gamma-1)(2\gamma+1)}.
\end{align*}

It follows from Theorem \ref{thm:GM} and Theorem \ref{thm:InvGM}  that
\begin{align}
\bE[W_{k_1 k_2} W_{k_3 k_4}] =&
\beta^2 \sigma_{k_1 k_2} \sigma_{k_3 k_4} 
+\frac{\beta}{2} (\sigma_{k_1 k_3} \sigma_{k_2 k_4} +  \sigma_{k_1 k_4} \sigma_{k_2 k_3}), \\
\bE[ W^{k_1 k_2} W^{k_3 k_4}] =& \frac{1}{\gamma(\gamma-1)(2\gamma+1)} 
\Big[ (2\gamma-1) \sigma^{k_1 k_2} \sigma^{k_3 k_4} +
\sigma^{k_1 k_3} \sigma^{k_2 k_4}+\sigma^{k_1 k_4} \sigma^{k_2 k_3} \Big],
\end{align}
for $(k_1,k_2,k_3,k_4) \in [d]^4$.

The average for the $(i,j)$-entry of $W^{2}$ is
\begin{align*}
\bE \left[ \sum_{k=1}^d W_{ik} W_{kj} \right] 
=& \beta^2 \sum_{k=1}^d \sigma_{ik} \sigma_{kj}
+ 
 \frac{\beta}{2} \sum_{k=1}^d (\sigma_{ik} \sigma_{kj} +\sigma_{ij} \sigma_{kk}) \\
=& \(\beta^2+\frac{\beta}{2} \) (\sigma^2)_{ij} + \frac{\beta}{2} (\tr \sigma) \sigma_{ij},
\end{align*}
and the average for the $(i,j)$-entry of $W^{-2}$ is
\begin{align*}
\bE \left[ \sum_{k=1}^d W^{ik} W^{kj} \right] 
=& \frac{1}{\gamma(\gamma-1)(2\gamma+1)} \Big[ (2\gamma-1) \sum_{k=1}^d \sigma^{ik} \sigma^{kj}
+ 
\sum_{k=1}^d (\sigma^{ik} \sigma^{kj} +\sigma^{ij} \sigma^{kk})\Big] \\
=& \frac{1}{\gamma(\gamma-1)(2\gamma+1)}
\( 2\gamma (\sigma^{-2})_{ij} + \tr(\sigma^{-1}) \sigma^{ij} \). 
\end{align*}
Therefore
\begin{align}
\bE [W^2]=& \(\beta^2+\frac{\beta}{2} \) \sigma^2 + \frac{\beta}{2} (\tr\sigma) \sigma, \\
\bE [W^{-2}] =&
\frac{1}{\gamma(\gamma-1)(2\gamma+1)}
\( 2\gamma \sigma^{-2} + \tr(\sigma^{-1}) \sigma \).
\end{align}

As we saw in  Example \ref{ex:mixMoment}, 
\begin{equation}
\bE[ \tr (W m_1 W m_2)]= \beta^2 \tr(\sigma m_1 \sigma m_2) +\frac{\beta}{2}
\tr(\sigma m_1^t \sigma m_2) + \frac{\beta}{2}\tr(\sigma m_1) \tr(\sigma m_2),
\end{equation}
and in a similar way we have
\begin{align}
\bE[ \tr (W^{-1} m_1 W^{-1} m_2)]=& \frac{1}{\gamma(\gamma-1)(2\gamma+1)} 
\Big[ (2\gamma-1) \tr(\sigma^{-1} m_1 \sigma^{-1} m_2)  \\
& \qquad +\tr(\sigma^{-1} m_1^t \sigma^{-1} m_2) +\tr(\sigma^{-1} m_1) \tr(\sigma^{-1} m_2) \Big]
. \notag
\end{align}
Moreover 
\begin{align}
\bE[\tr(Wm_1) \tr(Wm_2)]=&
\beta^2 \tr(\sigma m_1) \tr(\sigma m_2) +\frac{\beta}{2} \tr(\sigma m_1 \sigma m_2)
+\frac{\beta}{2} \tr(\sigma m_1^t \sigma m_2), \\
\bE[\tr(W^{-1}m_1) \tr(W^{-1}m_2)]=&
\frac{1}{\gamma(\gamma-1)(2\gamma+1)} 
\Big[ (2\gamma-1) \tr(\sigma^{-1} m_1) \tr( \sigma^{-1} m_2)  \\
& \qquad +\tr(\sigma^{-1} m_1 \sigma^{-1} m_2) +\tr(\sigma^{-1} m_1^t \sigma^{-1} m_2) \Big]. \notag
\end{align}

\subsection*{Degree 3}

Suppose $\gamma>0$ but $\gamma \not=1,2$.
From \eqref{eq:tWg} and a list in \cite{CM} (see also \cite{CS}),
the $\tWg(\rho;\gamma)$ \ $( \rho \vdash 3)$ are given by
\begin{align*}
\widetilde{\mr{Wg}}((3);\gamma)=& \frac{1}{u_3(\gamma)}, &
\widetilde{\mr{Wg}}((2,1);\gamma)=& \frac{\gamma-1}
{u_3(\gamma)}, &
\widetilde{\mr{Wg}}((1^3);\gamma)=& 
\frac{2\gamma^2-3\gamma-1}{u_3(\gamma)},
\end{align*}
where 
$$
u_3(\gamma)=\gamma(\gamma-1)(\gamma-2)(\gamma+1)(2\gamma+1).
$$
It follows from Theorem \ref{thm:GM} and Theorem \ref{thm:InvGM} that
\begin{align}
&\bE [W_{k_1 k_2} W_{k_3 k_4} W_{k_5 k_6}] \\
=&
\beta^3 \sigma_{k_1 k_2} \sigma_{k_3 k_4} \sigma_{k_5 k_6} 
+ \frac{\beta^2}{2} ( \sigma_{k_1 k_3} \sigma_{k_2 k_4} \sigma_{k_5 k_6}+ 
\sigma_{k_1 k_4} \sigma_{k_2 k_3} \sigma_{k_5 k_6}
+ \sigma_{k_1 k_5} \sigma_{k_2 k_6} \sigma_{k_3 k_4} \notag\\
& \quad + \sigma_{k_1 k_6} \sigma_{k_2 k_5} \sigma_{k_3 k_4} 
+\sigma_{k_1 k_2} \sigma_{k_3 k_5} \sigma_{k_4 k_6} 
+\sigma_{k_1 k_2} \sigma_{k_3 k_6} \sigma_{k_4 k_5}) \notag \\
&\quad + \frac{\beta}{4}(
\sigma_{k_1 k_4}\sigma_{k_2 k_5} \sigma_{k_3 k_6}  + 
\sigma_{k_1 k_3}  \sigma_{k_2 k_5}\sigma_{k_4 k_6} +
\sigma_{k_1 k_4} \sigma_{k_2 k_6} \sigma_{k_3 k_5} +
\sigma_{k_1 k_3} \sigma_{k_2 k_6}\sigma_{k_4 k_5} \notag \\
&\quad +
\sigma_{k_1 k_6} \sigma_{k_2 k_3}\sigma_{k_4 k_5}  +
\sigma_{k_1 k_5} \sigma_{k_2 k_3}\sigma_{k_4 k_6}  +
\sigma_{k_1 k_6} \sigma_{k_2 k_4}\sigma_{k_3 k_5}  +
\sigma_{k_1 k_5} \sigma_{k_2 k_4} \sigma_{k_3 k_6}  
) \notag
\end{align}
and 
\begin{align}
&\bE [W^{k_1 k_2} W^{k_3 k_4} W^{k_5 k_6}] \\
=&u_3(\gamma)^{-1}
\Big[(2\gamma^2-3\gamma-1) \sigma^{k_1 k_2} \sigma^{k_3 k_4} \sigma^{k_5 k_6} \notag \\
&\quad + (\gamma-1) ( \sigma^{k_1 k_3} \sigma^{k_2 k_4} \sigma^{k_5 k_6}+ 
\sigma^{k_1 k_4} \sigma^{k_2 k_3} \sigma^{k_5 k_6}
+ \sigma^{k_1 k_5} \sigma^{k_2 k_6} \sigma^{k_3 k_4} \notag \\
& \quad\quad + \sigma^{k_1 k_6} \sigma^{k_2 k_5} \sigma^{k_3 k_4} 
+\sigma^{k_1 k_2} \sigma^{k_3 k_5} \sigma^{k_4 k_6} 
+\sigma^{k_1 k_2} \sigma^{k_3 k_6} \sigma^{k_4 k_5}) \notag \\
&\quad + (
\sigma^{k_1 k_4} \sigma^{k_2 k_5} \sigma^{k_3 k_6}  + 
\sigma^{k_1 k_3} \sigma^{k_2 k_5} \sigma^{k_4 k_6} +
\sigma^{k_1 k_4} \sigma^{k_2 k_6} \sigma^{k_3 k_5} +
\sigma^{k_1 k_3} \sigma^{k_2 k_6} \sigma^{k_4 k_5} \notag \\
&\quad\quad +
\sigma^{k_1 k_6} \sigma^{k_2 k_3} \sigma^{k_4 k_5}  +
\sigma^{k_1 k_5} \sigma^{k_2 k_3} \sigma^{k_4 k_6}  +
\sigma^{k_1 k_6} \sigma^{k_2 k_4} \sigma^{k_3 k_5}  +
\sigma^{k_1 k_5} \sigma^{k_2 k_4} \sigma^{k_3 k_6}  
)
\Big]. \notag
\end{align}

From Corollary \ref{cor:Trace} we have
\begin{align}
\bE[\bm{p}_\mu(W)]=& \frac{16}{5} 
\( \frac{1}{6} A(\mu,(3)) \bm{p}_{(3)}(\sigma) +\frac{1}{8} A(\mu,(2,1)) \bm{p}_{(2,1)}(\sigma)
+\frac{1}{48} A(\mu,(1^3)) \bm{p}_{(1^3)} (\sigma) \), \\
\bE[\bm{p}_\mu(W^{-1})]=& \frac{16}{5} 
\( \frac{1}{6} B(\mu,(3)) \bm{p}_{(3)}(\sigma^{-1}) 
+\frac{1}{8} B(\mu,(2,1)) \bm{p}_{(2,1)}(\sigma^{-1})
+\frac{1}{48} B(\mu,(1^3)) \bm{p}_{(1^3)}(\sigma^{-1}) \),
\end{align}
for each $\mu \vdash 3$, where 
$$
A(\mu,\rho)= \frac{1}{8} \sum_{\lambda \vdash 3} C_{\lambda}(2\beta) f^{2\lambda} \omega^{\lambda}_\mu
\omega^{\lambda}_{\rho}
\qquad \text{and} \qquad
B(\mu,\rho)= - 8 \sum_{\lambda \vdash 3} C_{\lambda}(-2\gamma)^{-1} f^{2\lambda} \omega^{\lambda}_\mu \omega^{\lambda}_{\rho}.
$$
We compute the matrices 
$A=(A(\mu,\rho))_{\mu,\rho \vdash 3}$ and $B=(B(\mu,\rho))_{\mu,\rho \vdash 3}$.
Here indices of rows and columns of the matrices are labeled 
by $(3), \ (2,1), \ (1^3)$ in order.
By using results in \cite[VII.2]{Mac}, we have
$$
Z := (\omega^\lambda_\mu)_{\lambda,\mu \vdash 3}= \begin{pmatrix}
1 & 1 & 1 \\ -\frac{1}{4} & \frac{1}{6} & 1 \\ \frac{1}{4} & -\frac{1}{2} & 1 \end{pmatrix}.
$$
Since $f^{2\lambda}$ coincides with the number of standard Young tableaux of shape $2\lambda$
(see, e.g., [Sa]\footnote{[Sa] 
B. E. Sagan, 
The symmetric group. Representations, combinatorial algorithms, and symmetric functions,
second ed., Graduate Texts in Mathematics, {\bf 203}. Springer-Verlag, New York, 2001.
}
), we may have
$$
f^{2 (3)}=f^{(6)}=1, \qquad f^{2 (2,1)}=f^{(4,2)}=9, \qquad 
\text{and} \qquad f^{2(1^3)}=f^{(2^3)}=5.
$$
From the definition of $C_\lambda(z)$, it is immediate to see
$$
C_{(3)}(z)= z(z+2)(z+4), \qquad C_{(2,1)}(z)=z(z+2)(z-1), \qquad
\text{and} \qquad C_{(1^3)}(z)=z(z-1)(z-2).
$$
Now, letting $F:= \diag (f^{2(3)}, f^{2(2,1)}, f^{2(1^3)})$ and
$C(z):= \diag (C_{(3)}(z), C_{(2,1)}(z), C_{(1^3)}(z))$, 
we can calculate
$$
A = \frac{1}{8} Z^t \cdot F \cdot C(2\beta) \cdot Z  = \begin{pmatrix}
\frac{15}{16}\beta(2\beta^2+3\beta+2) & \frac{15}{8} \beta(2\beta+1) & \frac{15}{4} \beta \\
  \frac{15}{8}\beta (2\beta+1) & \frac{5}{4} \beta(2\beta^2+\beta+2) & \frac{15}{2} \beta^2 \\
  \frac{15}{4} \beta & \frac{15}{2} \beta^2 & 15 \beta^3
  \end{pmatrix},
$$ 
and
$$
B= -8 Z^t \cdot F \cdot C(-2\gamma)^{-1} \cdot Z 
 = \frac{1}{u_3(\gamma)}
 \begin{pmatrix} \frac{15}{4} \gamma ^2 & \frac{15}{2} \gamma & 15 \\ 
 \frac{15}{2} \gamma & 5 (\gamma^2-\gamma+1) & 15 (\gamma-1) \\
 15 & 15 (\gamma-1) & 15 (2\gamma^2 -3\gamma-1) \end{pmatrix}.
$$
Hence
\begin{align}
\bE[ \bm{p}_{(3)}(W)]=& \frac{1}{2}\beta(2\beta^2+3\beta+2) \bm{p}_{(3)}(\sigma) 
+ \frac{3}{4} \beta(2\beta+1) \bm{p}_{(2,1)}(\sigma)+ \frac{1}{4}\beta \bm{p}_{(1^3)}(\sigma), \\
\bE[ \bm{p}_{(2,1)}(W)]=&  \beta(2\beta+1) \bm{p}_{(3)}(\sigma) 
+ \frac{1}{2} \beta(2\beta^2+\beta+2) \bm{p}_{(2,1)}(\sigma)+ \frac{1}{2} \beta^2 \bm{p}_{(1^3)}(\sigma), \\
\bE[ \bm{p}_{(1^3)}(W)] =& 2\beta \bm{p}_{(3)}(\sigma) 
+ 3\beta^2\bm{p}_{(2,1)}(\sigma)+  \beta^3 \bm{p}_{(1^3)}(\sigma), 
\end{align}
and
\begin{align}
\bE[\bm{p}_{(3)}(W^{-1})]=& 
\frac{ 2\gamma^2\bm{p}_{(3)}(\sigma^{-1}) + 3\gamma \bm{p}_{(2,1)}(\sigma^{-1})
+ \bm{p}_{(1^3)}(\sigma^{-1})}
{\gamma(\gamma-1)(\gamma-2)(\gamma+1)(2\gamma+1)}, \\
\bE[\bm{p}_{(2,1)}(W^{-1})]=& 
\frac{4\gamma \bm{p}_{(3)}(\sigma^{-1}) + 2(\gamma^2-\gamma+1) \bm{p}_{(2,1)}(\sigma^{-1})
+(\gamma-1) \bm{p}_{(1^3)}(\sigma^{-1})}
{\gamma(\gamma-1)(\gamma-2)(\gamma+1)(2\gamma+1)}, \\
\bE[\bm{p}_{(1^3)}(W^{-1})]=& 
\frac{8 \bm{p}_{(3)}(\sigma^{-1}) +6 (\gamma-1) \bm{p}_{(2,1)}(\sigma^{-1})
+(2\gamma^2-3\gamma-1) \bm{p}_{(1^3)}(\sigma^{-1})}
{\gamma(\gamma-1)(\gamma-2)(\gamma+1)(2\gamma+1)}.
\end{align}
We remark that those formulas for $\bE[\bm{p}_\mu(W)]$ \ $(\mu \vdash 3)$
are seen  in \cite[equation (37)]{LM1}.

\subsection*{Degree $4$ and higher degrees}

First we note that,
when $n=4$,
the sums in Theorem \ref{thm:GM}, \ref{thm:InvGM} and \ref{thm:Trace1}
are over $|\mcal{M}(8)|=7 \cdot 5 \cdot 3 \cdot 1=105$ terms.

Consider Corollary \ref{cor:Trace} for any degree $n$. 
As we did in the degree 3 case, we 
can apply it to any degree $n$. The $f^{2\lambda}$ may be computed by 
the well-known hook formula, see e.g. [Sa, Theorem 3.10.2], and 
the $C_{\lambda}(z)$ may be done easily by the definition \eqref{eq:secialC}.
The $\omega^{\lambda}$ are the most complicated among quantities appearing in 
Corollary \ref{cor:Trace}
but we can know their explicit values 
from the table of zonal polynomials in [PJ]\footnote{
[PJ] A. M. Parkhurst and A. T. James,
Zonal polynomials of order $1$ through 12,
Selected Tables in Mathematical Statistics (1974), vol. 2, 199--388.}.

In closing, we give the explicit expressions of Corollary \ref{cor:Trace2} for $n=4$.
Its first formula is given
\begin{equation}
\bE[ (\tr W)^4] = 6 \beta \bm{p}_{(4)}(\sigma)+ 8 \beta^2 \bm{p}_{(3,1)}(\sigma) 
+ 3 \beta^2 \bm{p}_{(2^2)}(\sigma) + 6 \beta^3 \bm{p}_{(2,1^2)}(\sigma)
+  \beta^4 \bm{p}_{(1^4)}(\sigma).
\end{equation}
Suppose $\gamma>0$ but $\gamma \not=\frac{1}{2},1,2,3$.
Put
$$
u_4(\gamma)=\gamma(\gamma-1)(\gamma-2)(\gamma-3)(2\gamma-1)(\gamma+1)(2\gamma+1)(2\gamma+3),
$$
which is non-zero.
From \eqref{eq:tWg} and a list in \cite{CM} (see also \cite{CS}),
we have the explicit values
\begin{align*}
\tWg((4);\gamma)=& \frac{5\gamma-3}{u_4(\gamma)}, &
\tWg((3,1);\gamma)=& \frac{4\gamma(\gamma-2)}{u_4(\gamma)}, \\ 
\tWg((2^2);\gamma)=& \frac{2\gamma^2-5\gamma+9}{u_4(\gamma)}, &
\tWg((2,1^2);\gamma)=& \frac{4\gamma^3-12\gamma^2+3\gamma+3}{u_4(\gamma)}, \\
\tWg((1^4);\gamma)=& \frac{(\gamma+1)(2\gamma-3)(4\gamma^2-12\gamma+1)}{u_4(\gamma)}.
\end{align*}
Hence
the second formula of Corollary \ref{cor:Trace2} at $n=4$ is given
\begin{align}
u_4(\gamma) \cdot \bE[(\tr W^{-1})^4]=& 48(5\gamma-3) \bm{p}_{(4)}(\sigma^{-1}) 
+ 128\gamma(\gamma-2)\bm{p}_{(3,1)}(\sigma^{-1}) \\
&+ 12 (2\gamma^2-5\gamma+9) \bm{p}_{(2^2)}(\sigma^{-1}) \notag \\
&\quad +12 (4\gamma^3-12\gamma^2+3\gamma+3)  \bm{p}_{(2,1^2)}(\sigma^{-1})  \notag \\
&\qquad + (\gamma+1)(2\gamma-3)(4\gamma^2-12\gamma+1) \bm{p}_{(1^4)}(\sigma^{-1}). \notag
\end{align}


\end{document}